\documentclass[a4paper,12pt]{amsart}

\usepackage{amssymb, amsmath, amsthm, mathrsfs, braket, xspace}
\usepackage[margin=1.0in]{geometry}
\numberwithin{equation}{section}
\usepackage[colorlinks=true]{hyperref}
\usepackage{aliascnt}
\usepackage{mathtools}
\mathtoolsset{showonlyrefs=true}
\usepackage[all,2cell]{xy}
\objectmargin+{1mm}
\labelmargin+{0.8mm}
\SelectTips{cm}{12}
\usepackage{enumitem}
\setlist[enumerate]{itemsep=0pt,label=$({\rm \roman*})$, topsep=5pt}
\setlist[itemize]{itemsep=0pt, topsep=5pt, labelindent=\parindent, leftmargin=*}
\setlist[description]{itemsep=0pt, topsep=5pt, leftmargin=*}

\usepackage[T1]{fontenc}

\newtheorem{thm}{Theorem}[section]

\newaliascnt{cor}{thm}
\newtheorem{cor}[cor]{Corollary}
\aliascntresetthe{cor}

\newaliascnt{lem}{thm}
\newtheorem{lem}[lem]{Lemma}
\aliascntresetthe{lem}

\newaliascnt{prop}{thm}
\newtheorem{prop}[prop]{Proposition}
\aliascntresetthe{prop}

\newaliascnt{conj}{thm}

\aliascntresetthe{conj}

\theoremstyle{definition}
\newaliascnt{dfn}{thm}
\newtheorem{dfn}[dfn]{Definition}
\aliascntresetthe{dfn}

\newaliascnt{rem}{thm}

\aliascntresetthe{rem}

\newaliascnt{ex}{thm}
\newtheorem{ex}[ex]{Example}
\aliascntresetthe{ex}

\newcommand{\Bands}{\operatorname{\textbf{Bands}}}

\newcommand{\Null}{\mathcal{N}}

\title{Krasner Completions of Filtered Bands}

\author[T. Hiranouchi]{Toshiro Hiranouchi}
\address[T. Hiranouchi]{
Department of Basic Sciences, Graduate School of Engineering,
Kyushu Institute of Technology,
1-1 Sensui-cho, Tobata-ku, Kitakyushu-shi,
Fukuoka 804-8550 JAPAN}
\email{hira@mns.kyutech.ac.jp}

\keywords{MSC2020: 12J20, 20N20, 18A30; bands, filtered bands, subbands, idylls, hyperfields, valued fields, valuation rings, Krasner completion, inverse limits, localization}

\begin{document}
\date{\today}
\begin{abstract}
We introduce the Krasner completion of a band equipped with a decreasing
filtration by subgroups of its unit group and give a criterion for
reconstruction from its quotient bands. We recover the Krasner--Linzi 
inverse-limit theorem for valued fields. We also obtain a valuation ring
analogue of the Krasner--Linzi theorem and compare Krasner completions of unit
bands with adic completion.
\end{abstract}
\maketitle

\section{Introduction}

Let $(K,v)$ be a valued field with value group $\Gamma$, let
$(\widehat K,\widehat v)$ be its completion, and, for
$\gamma\in\Gamma_{\geq0}$, put
\[
U_v^\gamma
=
\set{u\in K^\times | v(u-1)>\gamma},
\qquad
K_\gamma
=
K^\times/U_v^\gamma\cup\set{0}.
\]
In \cite{Kra57} Krasner proved that a complete valued field can be recovered from these
quotient hyperfields $K_\gamma$ in rank one.  Linzi \cite[Theorem~2]{Lin23} extended this result
to valuations of arbitrary Archimedean rank and gave a categorical
formulation.  In particular, there is a canonical
isomorphism
\begin{equation}\label{eq:kras}
\widehat K
\xrightarrow{\simeq}
\varprojlim_{\gamma\in\Gamma_{\geq0}}K_\gamma.
\end{equation}
Thus ordinary addition on the completed field is recovered from
finite-level quotients whose addition is multivalued.

Motivated by this reconstruction theorem, we develop a framework for
completion and reconstruction in the category of bands.
A \emph{band} is a pointed commutative monoid equipped with a
\emph{null set} $N_B$ of finite formal sums that encodes additive
relations and gives every element a unique additive inverse
(for the precise definition, see \autoref{dfn:band}). An
\emph{idyll} is a band in which every nonzero element is
multiplicatively invertible (cf.\ \autoref{dfn:idyll}); thus idylls
are the field-like bands. 
Commutative rings and commutative Krasner hyperrings embed fully
faithfully into the category of bands, while hyperfields give rise
to idylls
(see \cite[Section~1.2]{BJL25}).

Let $B$ be a band and let
$\mathcal U=(U_\lambda)_{\lambda\in\Lambda}$ be a decreasing family
of subgroups of the unit group $B^\times$, indexed by a directed set, with
$U_\mu\subseteq U_\lambda$ whenever $\mu\geq\lambda$
(cf.\ \autoref{dfn:filtered-band}). 
The group $U_\lambda$ acts on every element of $B$ by multiplication.
The quotient band $B/U_\lambda$ is the orbit monoid for this action 
(cf.\ \autoref{dfn:quotient-band}). 
We define the \emph{Krasner completion} of $(B,\mathcal U)$ by
\[
\widehat B_{\mathcal U}
=
\varprojlim_{\lambda\in\Lambda}B/U_\lambda.
\]
The filtration $\mathcal U$ gives $B$ a topology whose basic neighborhoods of
$a$ are the orbits $aU_\lambda$. For $r\geq1$, we define the \emph{$r$-th null locus} 
\[
\Null_r(B) = \Set{(a_1,\ldots,a_r)\in B^r |  a_1+ \cdots +a_r\in N_B}
\]
(cf.\ \autoref{dfn:null-locus}). 
Our main result gives a criterion
for the canonical morphism
$B\to\widehat B_{\mathcal U}$
to be an isomorphism.

\begin{thm}[{\autoref{thm:criterion}}]
\label{thm:intro-main}
The canonical morphism $B\to\widehat B_{\mathcal U}$ is an
isomorphism of bands if and only if
\begin{enumerate}[label=$(\alph*)$]
\item it is bijective as a morphism of underlying pointed commutative
monoids, and
\item $\Null_r(B)$ is closed in $B^r$ for every $r\geq1$.
\end{enumerate}
\end{thm}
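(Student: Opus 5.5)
The plan is to make explicit how the inverse limit is a band, and then compare null sets. I will use the standard description of limits in the category of bands, presumably recorded earlier in the paper. The underlying pointed monoid of $\widehat B_{\mathcal U}=\varprojlim_\lambda B/U_\lambda$ is the limit of the underlying pointed monoids. A formal sum $\sum_i x_i$ of elements $x_i=(x_{i,\lambda})_\lambda$ lies in the null set exactly when, for every $\lambda$, its image $\sum_i x_{i,\lambda}$ lies in $N_{B/U_\lambda}$. For the quotient band I use \autoref{dfn:quotient-band}: a sum $\sum_i [a_i]$ is null in $B/U_\lambda$ if and only if there exist $u_i\in U_\lambda$ with $\sum_i a_iu_i\in N_B$. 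The canonical map $\iota\colon B\to\widehat B_{\mathcal U}$ sends $a\mapsto ([a]_\lambda)_\lambda$, and it is a morphism of bands.

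Next I use that a morphism of bands is an isomorphism if and only if it is bijective on underlying pointed monoids and reflects null sets. Indeed, given bijectivity, the inverse map is automatically a monoid map, and it is a band morphism exactly when $\iota(N_B)\supseteq N_{\widehat B}$, i.e.\ when $\sum_i\iota(a_i)\in N_{\widehat B}$ implies $\sum_i a_i\in N_B$. Hence (a) is clearly necessary, and under (a) the theorem reduces to the following claim: for all $r\ge1$ and $(a_1,\dots,a_r)\in B^r$,
\[
\sum_i\iota(a_i)\in N_{\widehat B}\iff(a_1,\dots,a_r)\in\overline{\Null_r(B)}.
\]
Here the closure is taken in the product topology on $B^r$. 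A basis of neighbourhoods of $(a_1,\dots,a_r)$ is given by the sets $a_1U_\lambda\times\cdots\times a_rU_\lambda$, since $\Lambda$ is directed and the $U_\lambda$ decrease, so a single index can be used in every coordinate. By the description of null sets above, the left-hand side says that for every $\lambda$ there are $u_i\in U_\lambda$ with $(a_1u_1,\dots,a_ru_r)\in\Null_r(B)$. That is exactly the statement that every basic neighbourhood of the tuple meets $\Null_r(B)$, which proves the claim.

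Granting the claim, the theorem follows. Since $\iota$ is a morphism, $\Null_r(B)\subseteq\overline{\Null_r(B)}$ always holds. The map $\iota$ reflects null sets precisely when $\overline{\Null_r(B)}\subseteq\Null_r(B)$ for all $r$, which is condition (b). Conversely, if $\iota$ is an isomorphism then it reflects null sets, so (b) holds, and (a) holds trivially.

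The main obstacle is the first step: pinning down the exact form of the null set in $B/U_\lambda$ and in the inverse limit. Null sets of bands are ideals in $\mathbb N[B]$ satisfying conditions such as unique additive inverses, so the naive levelwise null set might need to be closed up under some generating operation. I would first check against \autoref{dfn:quotient-band} and the construction of limits that the levelwise set, where each level is the orbit-image of $N_B$, already is an ideal with the required properties. This should follow because $U_\lambda$ acts by units, and because the element $-1$ is preserved, so additive inverses remain unique. I also need to make sure that the maps $B/U_\mu\to B/U_\lambda$ are compatible with these null sets.
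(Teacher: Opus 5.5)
Your proposal is correct and follows essentially the same route as the paper. Your claim is precisely \autoref{lem:closure-null}, proved the same way (using directedness of $\Lambda$ to pass to a single index $\lambda$ in all coordinates), and the theorem is then deduced, as in the paper, from the levelwise description of null sets in $\varprojlim_\lambda B/U_\lambda$ together with the fact \eqref{eq:isom} that a bijective band morphism is an isomorphism exactly when it reflects null relations; the concern in your last paragraph is handled in the paper by citing Baker--Jin--Lorscheid for the quotient band (identified with $B/\!\!/I_U$) and for the explicit form of limits in $\Bands$, consistent with your sketch.
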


%

As an application, we first
recover the theorem of Krasner--Linzi \autoref{eq:kras} directly from the general
reconstruction criterion. 
The dense embedding
$K\hookrightarrow\widehat K$ induces canonical isomorphisms
$K/U_v^\gamma\simeq\widehat K/U_{\widehat v}^\gamma$ of bands for
every $\gamma$ (\autoref{prop:finite-level-comparison}). For the
complete field $\widehat K$, the null loci are closed in the valuation
topology. 
Hence
\[
\widehat K
\simeq
\varprojlim_\gamma
\widehat K/U_{\widehat v}^\gamma
\simeq
\varprojlim_\gamma
K/U_v^\gamma.
\]

The valuation ring analogue is then obtained from the field case by
passing to the nonnegative-valuation subband. Let $R$ be a valuation
ring with fraction field $K$, value group $\Gamma$, and associated
valuation $v\colon K^\times\to\Gamma$, and put
\[
\widehat R=\set{x\in\widehat K| \widehat v(x)\geq0}.
\]
The dense embedding $R\hookrightarrow\widehat R$ induces 
isomorphisms
$R/U_v^\gamma\simeq\widehat R/U_{\widehat v}^\gamma$, while the
isomorphism \eqref{eq:kras} for $\widehat K$ identifies the inverse limit of
the valuation ring quotients with its nonnegative-valuation subband.
Thus
\begin{thm}[{\autoref{thm:valuation-ring-krasner}}]
Let $R$ be a valuation ring with value group $\Gamma$. Then there is
a canonical isomorphism of bands
\[
\widehat R
\xrightarrow{\simeq}
\varprojlim_{\gamma\in\Gamma_{\geq0}}R/U_v^\gamma.
\]
\end{thm}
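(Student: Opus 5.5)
The plan is to deduce the statement from the field case \eqref{eq:kras}, rather than running \autoref{thm:intro-main} again for $\widehat R$. Two facts are needed: a finite-level comparison for valuation rings, and a statement that ``nonnegative valuation'' can be detected at every finite level.

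\emph{Step 1: the map.} Regard $R\subseteq K$ and $\widehat R\subseteq\widehat K$ as subbands. For $\gamma\geq0$ the group $U_v^\gamma$ lies in $R^\times$, since $v(u-1)>\gamma\geq0$ forces $v(u)=0$. So $R/U_v^\gamma$ is defined, and it sits inside $K/U_v^\gamma$ as the set of orbits $xU_v^\gamma$ with $x=0$ or $v(x)\geq0$. This description is well defined because $v$ is constant on orbits. The null set of $R/U_v^\gamma$ is the restriction of that of $K/U_v^\gamma$, as both are induced from the null sets of $R$ and $K$, which are those of rings. The same holds for $\widehat R$ and $U_{\widehat v}^\gamma$. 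Taking inverse limits, the canonical morphism $\widehat R\to\varprojlim_\gamma \widehat R/U_{\widehat v}^\gamma$ is the restriction of the isomorphism $\widehat K\to\varprojlim_\gamma\widehat K/U_{\widehat v}^\gamma$ given by \eqref{eq:kras} applied to $\widehat K$ (whose completion is itself).

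\emph{Step 2: finite-level comparison.} I will restrict the isomorphisms $K/U_v^\gamma\simeq\widehat K/U_{\widehat v}^\gamma$ of \autoref{prop:finite-level-comparison} to the nonnegative parts. Since $\widehat v$ extends $v$ and is constant on orbits, the isomorphism matches $R/U_v^\gamma$ with $\widehat R/U_{\widehat v}^\gamma$ bijectively. The null sets match as well, because they are restrictions of null sets that already match. These isomorphisms are compatible with the transition maps, so they give $\varprojlim R/U_v^\gamma\simeq\varprojlim \widehat R/U_{\widehat v}^\gamma$.

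\emph{Step 3: the image is exactly the subband.} This is the main obstacle. Let $(\xi_\gamma)\in\varprojlim\widehat K/U_{\widehat v}^\gamma$ correspond to $x\in\widehat K$. I must show that $x\in\widehat R$ if and only if every $\xi_\gamma$ lies in $\widehat R/U_{\widehat v}^\gamma$.
\begin{itemize}
\item The projection $\widehat K\to\widehat K/U^\gamma$ sends $x$ to $\xi_\gamma=xU^\gamma$.
\item $xU^\gamma$ lies in the nonnegative part exactly when $x=0$ or $\widehat v(x)\geq0$, since the valuation is constant on orbits.
\end{itemize}
Hence the subbands correspond, and it suffices to check a single level.

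It remains to confirm that the restricted map is an isomorphism of bands, which means the null sets must correspond and not merely the underlying sets. A formal sum in $\widehat R$ is null in $\widehat R$ iff it is null in $\widehat K$. Likewise, a compatible family in $\varprojlim\widehat R/U^\gamma$ is null iff each component is null, which holds iff it is null in $\varprojlim\widehat K/U^\gamma$, assuming limits of bands are computed componentwise on null sets. So full faithfulness of the subband inclusions, together with the field isomorphism, gives the result. Composing with Step 2 yields the canonical isomorphism $\widehat R\simeq\varprojlim_\gamma R/U_v^\gamma$.
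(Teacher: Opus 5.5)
Your proposal is correct and follows the paper's proof. The shared steps are: reduce to $\widehat R$ via compatible finite-level isomorphisms; embed $\widehat R/U_{\widehat v}^\gamma$ into $\widehat K/U_{\widehat v}^\gamma$ with the induced null sets (all representatives and multipliers lie in $\widehat R$); apply the field case to the complete field $\widehat K$; and identify the image with the nonnegative-valuation part, using that $\widehat v$ is constant on orbits and that null sets in the limit are checked componentwise.

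The one difference is your Step~2. You obtain $R/U_v^\gamma\simeq\widehat R/U_{\widehat v}^\gamma$ by restricting the field-level isomorphism of \autoref{prop:finite-level-comparison}, so you ultimately rely on the Linzi lemma cited there. The paper instead proves this directly from the explicit criterion $v(a_1+\cdots+a_r)>\min_i v(a_i)+\gamma$ of \autoref{lem:valuation-ring-null}, which keeps the valuation-ring theorem independent of that citation. Both routes are valid.
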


%

A second branch of applications concerns subbands of fields.
For a subgroup $G\subseteq K^\times$ containing $-1$, the band
$B_G=G\cup\set{0}$ is a subband of the field-band $K$. The subband
reconstruction principle therefore yields a partial-field
reconstruction result from a multiplicative completeness hypothesis.
This produces reconstructible idylls which do not arise from
hyperfields (\autoref{cor:partial-field-reconstruction} and
\autoref{ex:nonhyperfield}).

Finally, 
let $R$
be a commutative ring and let $I\subseteq\operatorname{Jac}(R)$ be an
ideal, where $\operatorname{Jac}(R)$ denotes the Jacobson radical.
The unit band
$B^\times(R)=R^\times\cup\set{0}$ is a subband of $R$, and its Krasner
completion for $U_n=1+I^n$ is canonically identified with
$B^\times(\widehat R)$, where $\widehat R$ is the $I$-adic completion
(\autoref{thm:adic-comparison}). This shows that the same quotient-band
formalism also interacts naturally with ordinary adic completion.



\subsection*{Acknowledgements} 
The author was supported by JSPS KAKENHI Grant Number 24K06672.

\section{Bands and Krasner completion}
\label{sec:bands}

\subsection{Bands}
We follow the terminology of \cite[Section~1]{BJL25}.  Let $B$ be a pointed
commutative monoid. 
We write $B^+$ for the semiring of finite formal
sums of elements of $B$, with $0\in B$ identified with the empty sum.

\begin{dfn}\label{dfn:band}
A \emph{band} is a pointed commutative monoid $B$ together with an ideal
$N_B\subset B^+$, called the \emph{null set}, such that for every
$a\in B$ there is a unique $b\in B$ satisfying
$a+b\in N_B.$
The element $b$ is denoted by $-a$.
A finite formal sum $\sum_i a_i\in B^+$ is called \emph{null}
if $\sum_i a_i\in N_B$; such a sum 
$\sum_i a_i\in N_B$ is called a \emph{null relation}.

A \emph{morphism} of bands is a morphism $f\colon B\to C$ 
of pointed commutative monoids which preserves null relations, namely, 
$a_1+\cdots+a_r\in N_B \ \Rightarrow\ f(a_1)+\cdots+f(a_r)\in N_C.$
\end{dfn}

Suppose that
$f\colon B\to C$ 
is a morphism of bands whose underlying pointed commutative monoid map
is an isomorphism.  
The inverse map on the underlying pointed commutative monoids is a
morphism of bands if and only if $f$ also \emph{reflects null
relations}, namely 
\begin{equation}\label{eq:isom}
\mbox{if}\ f(a_1)+\cdots+f(a_r)\in N_C
\quad \mbox{then}\quad a_1+\cdots+a_r\in N_B
\end{equation}
for every $r\geq1$ and every $a_1,\ldots,a_r\in B$.  
We denote the category of bands by $\Bands$.
\begin{dfn}\label{dfn:idyll}
An \emph{idyll} is a band $B$ such that $0\neq1$ and every nonzero element is invertible.
\end{dfn}

\begin{dfn}\label{dfn:subband}
Let $C$ be a band. A \emph{subband} of $C$ is a pointed submonoid
$B\subseteq C$ containing $-1$, equipped with the induced null set
\[
N_B=B^+\cap N_C,
\]
where $B^+$ is identified with its natural subsemiring of $C^+$.
Equivalently, the inclusion $B\hookrightarrow C$ preserves and reflects
null relations. This terminology is consistent with the use of
subbands in \cite[Example~3.8]{BJL25}.
\end{dfn}

Every commutative ring $R$ defines a band with
\[
        N_R=\Set{\sum_i a_i\in R^+ |\sum_i a_i=0\text{ in }R}.
\]
A commutative hyperring $H$ in the sense of Krasner, whose addition is a multivalued operation $\boxplus$, defines a band with
\begin{equation}\label{eq:NH}
        N_H=\Set{\sum_i a_i\in H^+ | 0\in a_1\boxplus\cdots\boxplus a_r}.
\end{equation}
The construction $R\mapsto B_R$ defines a fully faithful functor
from the category of commutative rings to $\Bands$, and the construction
$H\mapsto B_H$ defines a fully faithful functor from the category of
commutative Krasner hyperrings to $\Bands$; see
\cite[Section~1.2]{BJL25}.

The category $\Bands$ of bands is known to be complete and cocomplete (\cite[Corollary~1.43]{BJL25}, see also \cite[Section~3.1.6]{Ham26}). 
We use the explicit description of inverse limits in $\Bands$. 
Indeed, limits in $\Bands$ are computed on the underlying pointed
monoids. Thus, if $B=\varprojlim_i B_i$ in $\Bands$, then
$B=\varprojlim_i B_i$ 
as \emph{monoids},
and a finite formal sum $\sum_\nu x_\nu\in B^+$ is null if and only if
\[
\sum_\nu \pi_i(x_\nu)\in N_{B_i}
\]
for every $i$, where $\pi_i\colon B\to B_i$ is the canonical
projection (see \cite[Section~1.11]{BJL25}).


\begin{dfn}
\label{dfn:quotient-band}
Let $B$ be a band and let $U\subset B^\times$ be a subgroup. The group $U$ acts on $B$ by multiplication. As a pointed commutative monoid, let $B/U$ be the orbit monoid, and write $[a]_U$ for the orbit of $a\in B$. Declare
$[a_1]_U+\cdots+[a_r]_U\in N_{B/U}$
if and only if there exist $u_1,\ldots,u_r\in U$ such that
$a_1u_1+\cdots+a_ru_r\in N_B.$
Zeros in a formal sum are ignored, and the empty sum is null.
\end{dfn}

The construction in \autoref{dfn:quotient-band} agrees with the quotient band 
$B\slash\!\!\slash I_U$ defined in \cite[Propositions~1.12 and~1.13]{BJL25} 
associated with the null ideal $I_U$ generated by the formal sums
$u-1$, $u\in U$.
Consequently, $B/U$ is a band and the canonical map
$q_U\colon B\to B/U$
is a morphism of bands.  If $B$ is an idyll, then $B/U$ is also an
idyll.

%

\begin{lem}
\label{lem:transition}
If $V\subset U\subset B^\times$ are subgroups, then
$[a]_V\mapsto[a]_U$
defines a morphism of bands $B/V\to B/U$.
\end{lem}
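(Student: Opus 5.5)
The plan is to check the two things the statement asks for: the map is well defined as a map of pointed commutative monoids, and it preserves null relations. We define $\phi\colon B/V\to B/U$ by $\phi([a]_V)=[a]_U$. Well-definedness is immediate. If $[a]_V=[b]_V$, then $b=av$ for some $v\in V\subset U$, so $[b]_U=[a]_U$. Equivalently, each $V$-orbit is contained in a single $U$-orbit.

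Next we check the monoid structure. Multiplication on an orbit monoid is induced from $B$: $[a]_V[b]_V=[ab]_V$. This is well defined because $U$ and $V$ are subgroups of the commutative group $B^\times$, so $(au)(bu')=ab(uu')$. It follows at once that $\phi([a]_V[b]_V)=\phi([ab]_V)=[ab]_U=\phi([a]_V)\phi([b]_V)$. Also $\phi$ sends $[1]_V$ to $[1]_U$ and $[0]_V=\{0\}$ to $[0]_U=\{0\}$. Hence $\phi$ is a morphism of pointed commutative monoids.

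It remains to show that $\phi$ preserves null relations, using \autoref{dfn:quotient-band}. Suppose $[a_1]_V+\cdots+[a_r]_V\in N_{B/V}$. By definition there are $v_1,\ldots,v_r\in V$ with $a_1v_1+\cdots+a_rv_r\in N_B$. Since $V\subset U$, the same elements $u_i:=v_i$ lie in $U$ and witness $[a_1]_U+\cdots+[a_r]_U\in N_{B/U}$.

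Two small bookkeeping points need care. The definition says zeros are ignored, and $\phi$ sends nonzero classes to nonzero classes and $0$ to $0$, so the zero terms match on both sides. The empty sum is null in both bands. There is also no choice of representatives involved: the definition of $N_{B/V}$ quantifies over all of $V$, so the condition does not depend on which representatives $a_i$ are chosen.

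There is no real obstacle; the only point needing care is the convention about zero summands. Alternatively, one could argue through \cite[Propositions~1.12 and~1.13]{BJL25}: the null ideal $I_V$ is contained in $I_U$, so the universal property of $B/\!\!/I_V$ applied to $q_U$ gives the map directly. The direct check above is simpler, so we use that.
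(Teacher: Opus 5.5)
Your proposal is correct and follows the paper's argument: the witnesses $v_i\in V$ for a null relation in $B/V$ also lie in $U$, so they witness the image relation in $B/U$. Your explicit checks of well-definedness, multiplicativity, and the zero-term convention are correct; the paper leaves these implicit.
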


\begin{proof}
A null relation modulo $V$ is represented by a null relation in $B$
after multiplication of its terms by elements of $V$.  Since
$V\subset U$, the same representatives show that its image is null
modulo $U$.
\end{proof}

\subsection{Krasner completion}
Let $\Lambda$ be a directed set.

\begin{dfn}\label{dfn:filtered-band}
A \emph{filtered band} is a pair $(B,\mathcal U)$ consisting of a band $B$ and a family
$\mathcal U=(U_\lambda)_{\lambda\in\Lambda}$ 
of subgroups of $B^\times$ such that $U_\mu\subseteq U_\lambda$ whenever $\mu\geq\lambda$. 
The \emph{Krasner completion} of $(B,\mathcal U)$ is the inverse limit in $\Bands$ 
\[
\widehat B_{\mathcal U}:=\varprojlim_{\lambda\in\Lambda}B/U_\lambda.
\]
A filtered band is called a \emph{filtered idyll} if its underlying band is an idyll.
\end{dfn}

The filtration $\mathcal U$ gives $B$ the topology whose basic neighborhoods of $a\in B$ are $aU_\lambda$. Since $0U_\lambda=\set{0}$, the point $0$ is isolated.  We always give
$B^r$ the product topology.

\begin{dfn}\label{dfn:null-locus}
For $r\geq1$, the \emph{$r$-th null locus} of $B$ is
\[
        \Null_r(B)
        =\Set{(a_1,\ldots,a_r)\in B^r|a_1+\cdots+a_r\in N_B}.
\]
\end{dfn}

\begin{lem}
\label{lem:closure-null}
Let $a_1,\ldots,a_r\in B$.  The following are equivalent.
\begin{enumerate}
\item For every $\lambda\in\Lambda$, we have 
$[a_1]_\lambda+\cdots+[a_r]_\lambda \in N_{B/U_\lambda}.$
Here, we write $[a]_\lambda = [a]_{U_\lambda}$ in $B/U_\lambda$. 
\item The point $(a_1,\ldots,a_r)$ belongs to the closure $\overline{\Null_r(B)}$ of
$\Null_r(B)$ in $B^r$.
\end{enumerate}
\end{lem}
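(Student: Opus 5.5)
The plan is to unwind both conditions into the same statement about representatives. The tool that makes this work is the description of basic open sets in the product topology on $B^r$. A basic neighborhood of $(a_1,\ldots,a_r)$ has the form $a_1U_{\lambda_1}\times\cdots\times a_rU_{\lambda_r}$. Since $\Lambda$ is directed, we may choose $\lambda\geq\lambda_1,\ldots,\lambda_r$. Then $U_\lambda\subseteq U_{\lambda_i}$ for all $i$, so the sets
\[
W_\lambda:=a_1U_\lambda\times\cdots\times a_rU_\lambda,\qquad \lambda\in\Lambda,
\]
form a neighborhood basis of $(a_1,\ldots,a_r)$. Consequently, $(a_1,\ldots,a_r)\in\overline{\Null_r(B)}$ if and only if $W_\lambda\cap\Null_r(B)\neq\emptyset$ for every $\lambda\in\Lambda$.

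Next I rewrite condition (i) by \autoref{dfn:quotient-band}. The relation $[a_1]_\lambda+\cdots+[a_r]_\lambda\in N_{B/U_\lambda}$ holds exactly when there are $u_1,\ldots,u_r\in U_\lambda$ with $a_1u_1+\cdots+a_ru_r\in N_B$. That is, it holds exactly when $(a_1u_1,\ldots,a_ru_r)\in\Null_r(B)$ for some point of $W_\lambda$. So condition (i) at level $\lambda$ is literally the statement $W_\lambda\cap\Null_r(B)\neq\emptyset$. Quantifying over all $\lambda$ then gives (i)$\Leftrightarrow$(ii).

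The only point needing care is the convention in \autoref{dfn:quotient-band} that zeros in a formal sum are ignored. If some $a_i=0$, then $a_iU_\lambda=\{0\}$, so the $i$-th coordinate of every point of $W_\lambda$ is $0$. Since $0$ is also the empty sum in $B^+$, dropping that term changes neither side of the equivalence, and the argument goes through unchanged.

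I do not expect a genuine obstacle. The one step requiring a moment's thought is the reduction to a single common index $\lambda$ via directedness, which uses the decreasing property $U_\mu\subseteq U_\lambda$ for $\mu\geq\lambda$ from \autoref{dfn:filtered-band}.
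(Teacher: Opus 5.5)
Your proposal is correct and follows essentially the same route as the paper. Both arguments use directedness to reduce to the diagonal boxes $a_1U_\lambda\times\cdots\times a_rU_\lambda$, and both observe that such a box meeting $\Null_r(B)$ is exactly the defining condition for $[a_1]_\lambda+\cdots+[a_r]_\lambda$ to be null in $B/U_\lambda$. The only difference is presentation: you state this as a single neighborhood-basis equivalence, while the paper writes out the two implications separately.
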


\begin{proof}
We may omit the coordinates for which $a_i=0$, since $0$ is isolated
and zeros do not affect null relations. 
Recall that $(b_1,\ldots,b_r)\in B^r$ is in the closure $\overline{\Null_r(B)}$ 
if and only if 
$(b_1U_{\lambda_1}\times \cdots \times b_r U_{\lambda_r}) \cap \Null_r(B)\neq \emptyset$ 
for any $\lambda_1,\ldots,\lambda_r\in \Lambda$. 

\smallskip
\noindent
(i) $\Rightarrow$ (ii).
First, we assume (i). 
Consider 
a basic neighborhood 
$a_1U_{\lambda_1}\times\cdots\times a_rU_{\lambda_r}$ 
of $(a_1,\ldots,a_r)$. 
Choose $\lambda$ such that  $U_\lambda \subset \cap_{i=1}^rU_{\lambda_i}$.  
From the assumption (i), 
$[a_1]_\lambda+\cdots+[a_r]_\lambda$ 
is in $N_{B/U_\lambda}$. 
By \autoref{dfn:quotient-band}, there are $u_i\in U_\lambda$ such
that 
$a_1u_1+\cdots+a_ru_r\in N_B.$
This implies 
\[
\emptyset \neq a_1U_\lambda \times \cdots \times a_rU_\lambda \cap \Null_r(B)\subset a_1U_{\lambda_1}\times \cdots \times a_rU_{\lambda_r} \cap \Null_r(B).
\]

\smallskip
\noindent
(ii) $\Rightarrow$ (i).
Conversely, for every $\lambda\in \Lambda$, 
$a_1U_\lambda \times \cdots \times a_rU_{\lambda} \cap \Null_r(B) \neq \emptyset.$
Therefore, 
there exist $u_i \in U_\lambda$ such that 
$a_1u_1+\cdots+a_ru_r\in N_B.$
By \autoref{dfn:quotient-band}, 
this implies $[a_1]_\lambda + \cdots + [a_r]_{\lambda}$ is in $N_{B/U_\lambda}$. 
%
%
\end{proof}

There is a canonical band morphism $\eta_B\colon
B\to\widehat B_{\mathcal U}$ given by
$a\mapsto([a]_\lambda)_\lambda$.
The preceding description of null relations at finite levels leads to
the following reconstruction criterion.

\begin{thm}
\label{thm:criterion}
Let $(B,\mathcal U)$ be a filtered band. The following are equivalent.
\begin{enumerate}
\item The canonical morphism $\eta_B:B\to\widehat B_{\mathcal U}$ is an isomorphism of bands.
\item $(\mathrm{a})$ The canonical map $B\to\widehat B_{\mathcal U}$ is an isomorphism of pointed commutative monoids, and \\
	$(\mathrm{b})$ $\Null_r(B)$ is closed in $B^r$ for every $r\geq1$.
\end{enumerate}
\end{thm}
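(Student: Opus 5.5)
The plan is to reduce everything to the explicit description of null relations in an inverse limit, combined with \autoref{lem:closure-null}. Both directions use the criterion recalled before \autoref{dfn:idyll}: a morphism of bands whose underlying pointed monoid map is bijective is an isomorphism of bands if and only if it reflects null relations in the sense of \eqref{eq:isom}.

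First, I will translate reflection of null relations for $\eta_B$ into a topological statement. Inverse limits in $\Bands$ are computed on underlying monoids. A formal sum $\sum_\nu x_\nu$ in $\widehat B_{\mathcal U}$ is therefore null if and only if its projection to each $B/U_\lambda$ is null. Applied to $x_\nu=\eta_B(a_\nu)$, the sum $\eta_B(a_1)+\cdots+\eta_B(a_r)$ is null in $\widehat B_{\mathcal U}$ exactly when $[a_1]_\lambda+\cdots+[a_r]_\lambda\in N_{B/U_\lambda}$ for every $\lambda$. By \autoref{lem:closure-null}, this holds exactly when $(a_1,\ldots,a_r)\in\overline{\Null_r(B)}$. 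Since $\eta_B$ is a morphism of bands, $\Null_r(B)$ is already contained in this set. Hence $\eta_B$ reflects null relations of length $r$ if and only if $\overline{\Null_r(B)}\subseteq\Null_r(B)$, that is, if and only if $\Null_r(B)$ is closed.

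For (i)$\Rightarrow$(ii): if $\eta_B$ is an isomorphism of bands, its underlying monoid map is bijective with monoid-morphism inverse, which gives (a). The inverse preserves null relations, so $\eta_B$ reflects them. By the translation above, every $\Null_r(B)$ is closed, which gives (b).

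For (ii)$\Rightarrow$(i): by (a), $\eta_B$ is a bijective morphism of bands. By (b) and the translation, it reflects null relations. So its inverse is a band morphism by \eqref{eq:isom}.

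The only points needing care are bookkeeping, not a real obstacle. The first is zero entries and the empty sum: zeros are ignored in quotient null sets, and $0$ is isolated, which \autoref{lem:closure-null} already handles. The second is that the explicit limit description must be applied to images of elements of $B$, which is legitimate under (a), and in the forward direction simply because $\eta_B$ is defined on $B$.
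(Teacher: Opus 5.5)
Your proposal is correct and follows essentially the same route as the paper: both directions rest on the inverse-limit description of null relations, \autoref{lem:closure-null}, and the reflection criterion \eqref{eq:isom}. The only difference is that you isolate the common step ($\eta_B$ reflects null relations of length $r$ exactly when $\Null_r(B)$ is closed) once, whereas the paper repeats it inside each direction; that step holds for every filtered band, so your remark that it is ``legitimate under (a)'' is unnecessary.
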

\begin{proof}
%
%
(i) $\Rightarrow$ (ii).
Suppose that $\eta_B\colon B\to\widehat B_{\mathcal U}$ 
is an isomorphism of bands.  
Then its underlying pointed commutative monoid map is an isomorphism; the condition (a) holds.
We show that $\Null_r(B)$ is closed in $B^r$ for every $r\geq1$.
Fix $r\geq1$ and let
$(a_1,\ldots,a_r)\in\overline{\Null_r(B)}$.
By \autoref{lem:closure-null}, this is equivalent to
$[a_1]_\lambda+\cdots+[a_r]_\lambda \in N_{B/U_\lambda}$
for every $\lambda\in\Lambda$.
A finite formal sum in an inverse limit of bands is null if and only if
its image in every component is null. 
Hence
$\eta_B(a_1)+\cdots+\eta_B(a_r) \in N_{\widehat B_{\mathcal U}}.$
Since $\eta_B$ is an isomorphism of bands, it reflects null relations.
It follows that
$a_1+\cdots+a_r\in N_B,$
or equivalently, $(a_1,\ldots,a_r)\in\Null_r(B)$.
We have therefore proved that
$\overline{\Null_r(B)} =\Null_r(B)$.
Thus $\Null_r(B)$ is closed in $B^r$.

\smallskip
\noindent
(ii) $\Rightarrow$ (i). 
Suppose the conditions (a) and (b).
The underlying
pointed commutative monoid map is an isomorphism by (a). 
By \eqref{eq:isom}, 
it is enough to show that 
for $a_1,\ldots,a_r\in B$ with $\eta_B(a_1)+\cdots+\eta_B(a_r)$
in $N_{\widehat B_{\mathcal U}}$, 
we have $a_1 + \cdots + a_r \in N_B$.

Take $a_1,\ldots,a_r\in B$ with 
$\eta_B(a_1)+\cdots+\eta_B(a_r) \in N_{\widehat B_{\mathcal U}}$.
Since a finite formal sum in an inverse limit of bands is null if and
only if its image in every component is null, 
we obtain 
$[a_1]_\lambda+\cdots+[a_r]_\lambda \in N_{B/U_\lambda}$ 
for every $\lambda\in \Lambda$. 
Therefore, we have 
\[
(a_1,\ldots,a_r) \in \overline{\Null_r(B)}.  
\]
The condition (b) now implies
$\overline{\Null_r(B)}=\Null_r(B)$,
and hence
$a_1+\cdots+a_r\in N_B$.
Therefore $\eta_B$ reflects null relations, and hence
$\eta_B$ is an isomorphism of bands.%
\end{proof}


\begin{cor}
\label{cor:idyll-criterion}
Let $(B,\mathcal U)$ be a filtered idyll. Then the condition $(\mathrm{a})$ in $(\mathrm{ii})$ of \autoref{thm:criterion} is equivalent to the following condition: 
 
\noindent 
$(\mathrm{a}$\!'\,$)$ The canonical map
$\eta_B^\times\colon B^\times\to \varprojlim_\lambda B^\times/U_\lambda$ 
is bijective.
\end{cor}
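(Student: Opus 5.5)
The plan is to compare the underlying pointed monoids on both sides after splitting off the zero element. Since $B$ is an idyll, $B=B^\times\sqcup\set{0}$. Each $U_\lambda\subseteq B^\times$ fixes $0$ and preserves $B^\times$, so the orbit monoid decomposes as
\[
B/U_\lambda=(B^\times/U_\lambda)\sqcup\set{[0]_\lambda},
\]
and $B^\times/U_\lambda$ is the unit group of the idyll $B/U_\lambda$. The transition maps $B/U_\mu\to B/U_\lambda$ of \autoref{lem:transition} send $[0]_\mu$ to $[0]_\lambda$ and units to units. Hence they respect this decomposition.

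The key step is to compute the inverse limit as a set. Limits in $\Bands$ are computed on underlying pointed monoids, so an element of $\widehat B_{\mathcal U}$ is a compatible family $(x_\lambda)_\lambda$ with $x_\lambda\in B/U_\lambda$. I claim that either every $x_\lambda=[0]_\lambda$, or every $x_\lambda$ is a unit. Suppose $x_\lambda=[0]_\lambda$ for some $\lambda$ and let $\mu$ be arbitrary. By directedness choose $\nu\geq\lambda,\mu$. The transition map sends $x_\nu$ to $x_\lambda=[0]_\lambda$, and the preimage of $[0]_\lambda$ under $B/U_\nu\to B/U_\lambda$ is only $[0]_\nu$, because units map to units. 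So $x_\nu=[0]_\nu$, and then $x_\mu$, being the image of $x_\nu$, is $[0]_\mu$. This gives a decomposition of pointed monoids
\[
\widehat B_{\mathcal U}=\Bigl(\varprojlim_\lambda B^\times/U_\lambda\Bigr)\sqcup\set{0}.
\]
Directedness is used exactly at this point, and it is the only step needing care. One should also check that $\Lambda$ is nonempty, so that the zero family is distinct from the unit families.

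Under these decompositions, $\eta_B$ sends $0$ to the zero family and restricts to $\eta_B^\times$ on $B^\times$. Therefore $\eta_B$ is bijective if and only if $\eta_B^\times$ is bijective. Any bijective morphism of pointed commutative monoids has an inverse map that is again multiplicative and pointed, so it is an isomorphism of pointed commutative monoids. This gives the equivalence of $(\mathrm{a})$ and $(\mathrm{a}')$.
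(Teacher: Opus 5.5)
Your proposal is correct and follows the paper's proof. Both split $B$ and each $B/U_\lambda$ as $\{0\}\sqcup(\text{units})$, identify $\widehat B_{\mathcal U}$ with $\{0\}\sqcup\varprojlim_\lambda B^\times/U_\lambda$, and observe that $\eta_B$ restricts to $\eta_B^\times$ on the nonzero part. You additionally justify the identification of the limit, which the paper simply asserts as "natural": a compatible family is either identically zero or consists entirely of units, using directedness and $\Lambda\neq\emptyset$.
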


\begin{proof}
For an idyll, $B=\set{0}\sqcup B^\times$ and $B/U_\lambda=\set{0}\sqcup B^\times/U_\lambda$. 
There is a natural identification
\[
\varprojlim_\lambda B/U_\lambda
=
\set{0}\sqcup
\varprojlim_\lambda B^\times/U_\lambda
\]
of pointed commutative monoids.
Under this natural identification, the canonical map
\[
\eta_B\colon
B\to \varprojlim_\lambda B/U_\lambda
\]
is given by $0\mapsto(0)_\lambda$ 
on the zero element and by the canonical map
$B^\times\to 
\varprojlim_\lambda B^\times/U_\lambda$,
on the nonzero part. Therefore, $\eta_B$ is an isomorphism of pointed
commutative monoids if and only if this canonical map on unit groups is
bijective. 
\end{proof}

The criterion above has the following useful form for bands whose null
relations are induced from an ambient band.

\begin{cor}
\label{cor:subband-reconstruction}
Let $C$ be a band equipped with a topology such that $\Null_r(C)$ is
closed in $C^r$ for every $r\geq1$, and let $B\subseteq C$ be a
subband. Let $\mathcal U=(U_\lambda)_{\lambda\in\Lambda}$ be a
filtration of $B$. Assume that the filtration topology on $B$ is finer
than the subspace topology inherited from $C$ and that the canonical
map
$B\to\varprojlim_\lambda B/U_\lambda$
is an isomorphism of pointed commutative monoids. Then the canonical
morphism
\[
B\xrightarrow{\simeq}\varprojlim_\lambda B/U_\lambda
\]
is an isomorphism of bands.
\end{cor}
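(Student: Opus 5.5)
We reduce the statement to \autoref{thm:criterion}. Condition $(\mathrm{a})$ of (ii) is assumed, so it only remains to check condition $(\mathrm{b})$: that $\Null_r(B)$ is closed in $B^r$ for every $r\geq1$, where $B^r$ carries the product of the filtration topologies.

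The key observation is that, since $B$ is a subband, $N_B=B^+\cap N_C$. Hence, for every $r$,
\[
\Null_r(B)=\Null_r(C)\cap B^r .
\]
Give $B^r$ the product of the subspace topologies coming from $C$. In that topology $B^r$ is a subspace of $C^r$, because products commute with subspaces. Since $\Null_r(C)$ is closed in $C^r$ by hypothesis, $\Null_r(B)$ is closed in $B^r$ for this topology.

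Next we pass to the filtration topology. By assumption the filtration topology on $B$ is finer than the subspace topology, so the identity map from $B$ with the filtration topology to $B$ with the subspace topology is continuous. The product of continuous maps is continuous, so the identity on $B^r$ is continuous from the product filtration topology to the product subspace topology. The preimage of a closed set is closed, so $\Null_r(B)$ is closed in $B^r$ for the filtration topology, which is the topology used in \autoref{thm:criterion}. Thus $(\mathrm{b})$ holds, and \autoref{thm:criterion} gives that $\eta_B$ is an isomorphism of bands.

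There is no real obstacle. The only points needing care are that ``finer'' is used in the correct direction (closed sets in the coarser topology stay closed in the finer one), and that the product topology on $B^r$ is taken consistently. Zero coordinates need no special treatment here, since the argument is purely topological and never uses them.
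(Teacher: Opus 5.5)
Your proposal is correct and is essentially the paper's proof. Both use $\Null_r(B)=B^r\cap\Null_r(C)$, which follows from $N_B=B^+\cap N_C$, to get closedness in the subspace topology. Closedness then passes to the finer filtration topology, and \autoref{thm:criterion} finishes the argument. You just spell out the product-topology bookkeeping that the paper leaves implicit.
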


\begin{proof}
For every $r\geq1$, the definition of a subband gives
$\Null_r(B)=B^r\cap\Null_r(C)$.
Thus $\Null_r(B)$ is closed for the subspace topology inherited from
$C$, and hence also for the finer filtration topology on $B$. The
assertion follows from \autoref{thm:criterion}.
\end{proof}

%


\section{Applications}
\label{sec:applications}

We now apply the reconstruction principles of
\autoref{sec:bands}. 

\subsection{Valued fields}

Let $(K,v)$ be a valued field with value group $\Gamma$ (for the definition, see \cite[Section~3]{Lin23}), put $v(0)=\infty$, and, for $\gamma\in\Gamma_{\geq0}$, set
$U_v^\gamma=\set{u\in K^\times | v(u-1)>\gamma}.$
Regard $K$ as a band and form the quotient band $K/U_v^\gamma$.

\begin{prop}\label{prop:field-quotient}
The quotient band $K/U_v^\gamma$ is canonically isomorphic to the band associated with Krasner's quotient hyperfield
\[
        H =K^\times/U_v^\gamma\cup\set{0}.
\]
\end{prop}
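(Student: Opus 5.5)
The plan is to compare the two bands directly on the same underlying pointed monoid and check that their null sets coincide. Both the quotient band $K/U_v^\gamma$ of \autoref{dfn:quotient-band} and the band $B_H$ of \eqref{eq:NH} have underlying pointed commutative monoid $K^\times/U_v^\gamma\cup\set{0}$. For the quotient band this holds because $K$ is a field: the orbit of $0$ is $\set{0}$ and the orbits of nonzero elements are the cosets $aU_v^\gamma$. Hence the identity map on this pointed monoid is the candidate isomorphism. It suffices to show that, for $a_1,\ldots,a_r\in K$,
\[
[a_1]+\cdots+[a_r]\in N_{K/U_v^\gamma}
\iff
0\in [a_1]\boxplus\cdots\boxplus[a_r]\ \text{in } H .
\]
By the convention that zeros are ignored on both sides, we may assume every $a_i$ is nonzero.

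Next I unwind the definitions on both sides. Krasner's hyperaddition on $H$ is $[a]\boxplus[b]=\set{[a u+b u'] | u,u'\in U_v^\gamma}$, where $[0]=0$. By associativity of the hyperoperation, an induction on $r$ gives
\[
[a_1]\boxplus\cdots\boxplus[a_r]=\set{[a_1u_1+\cdots+a_ru_r] | u_i\in U_v^\gamma}.
\]
To justify this, I take an element $[c]$ of $[a_1]\boxplus\cdots\boxplus[a_{r-1}]$ with $c=\sum_{i<r} a_iu_i$. Then every element of $[c]\boxplus[a_r]$ has the form $[cu+a_ru']=[\sum_{i<r}a_i(u_iu)+a_ru']$, and $U_v^\gamma$ is a group, so the set is exactly as claimed. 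Consequently $0$ lies in this hypersum if and only if there are $u_i\in U_v^\gamma$ with $\sum a_iu_i=0$ in $K$. This is precisely the condition $a_1u_1+\cdots+a_ru_r\in N_K$, i.e.\ the defining condition of $N_{K/U_v^\gamma}$.

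Having the same null sets, the identity map preserves and reflects null relations. By \eqref{eq:isom} it is therefore an isomorphism of bands. Canonicity is clear, since the map commutes with the projections from $K$.

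The main obstacle is the bookkeeping in the induction for the $r$-fold hypersum, together with the fact that Krasner's quotient is indeed a hyperfield. The latter requires $U_v^\gamma$ to be a subgroup of $K^\times$: if $v(u-1),v(u'-1)>\gamma\ge0$ then $v(u)=0$, $v(uu'-1)=v(u(u'-1)+(u-1))>\gamma$, and $v(u^{-1}-1)=v(1-u)>\gamma$. I would cite Krasner for the hyperfield axioms and only verify the displayed description of iterated sums.
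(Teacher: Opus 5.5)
Your proposal is correct and follows the paper's proof: both take the identity map on $K^\times/U_v^\gamma\cup\set{0}$ and match the two null sets through the existence of $u_i\in U_v^\gamma$ with $a_1u_1+\cdots+a_ru_r=0$ in $K$. Your induction on the $r$-fold hypersum, and the check that $U_v^\gamma$ is a subgroup, spell out what the paper simply cites as ``the definition of Krasner's quotient hyperaddition.''
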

\begin{proof}
Put $U = U_v^\gamma$ and 
let
$\varphi\colon K/U \to  H$
be the identity on the underlying pointed set, namely
$\varphi(0)=0$,
$\varphi([a]_{U})=[a]_{U}$
($a\in K^\times$).
The map $\varphi$ is an isomorphism of pointed commutative monoids.

It remains to compare the null relations: 
\[
[a_1]_{U} + \cdots + [a_r]_{U} \in N_{K/U}\ \mbox{if and only if}\ 
\varphi([a_1]_{U_v^\gamma})+\cdots+
\varphi([a_r]_{U_v^\gamma}) \in N_{H}.
\]
Here, the null set $N_{H}$ of the hyperfield $H$ is defined by 
\[
N_{H} = \Set{\sum_{i=1}^r [a_i]_{U} \in H^{+} | 0\in [a_1]_U\boxplus \cdots\boxplus [a_r]_{U} }
\]
as we recalled in \eqref{eq:NH}.

 Let
$a_1,\ldots,a_r\in K$, where zero terms may be omitted.  
Suppose that 
$[a_1]_U+ \cdots + [a_r]_U\in N_{K/U}$. 
By the definition of the quotient band \autoref{dfn:quotient-band}, 
there exist $u_1,\ldots,u_r\in U$ such that 
$a_1u_1+ \cdots + a_ru_r \in N_K.$
Since the null set of the band associated with the field $K$ is
\[
N_K
=
\Set{
\sum_{i=1}^r x_i\in K^+
|\sum_{i=1}^r x_i=0\text{ in }K},
\]
we have $a_1u_1+ \cdots + a_ru_r =0$. 
This implies 
$0\in [a_1]_{U_v^\gamma}\boxplus\cdots\boxplus [a_r]_{U_v^\gamma}.$
Therefore, 
$\varphi([a_1]_{U})+\cdots+ \varphi([a_r]_{U}) \in N_{H}$
holds. 

Conversely, 
we assume 
$\varphi([a_1]_{U})+\cdots+ \varphi([a_r]_{U}) \in N_{H}.$
This gives 
$0\in [a_1]_{U_v^\gamma}\boxplus\cdots\boxplus [a_r]_{U_v^\gamma}.$
By the definition of Krasner's quotient hyperaddition, this is
equivalent to the existence of
$u_1,\ldots,u_r\in U_v^\gamma$ 
such that
$a_1u_1+\cdots+a_ru_r=0$
in $K$, namely, 
$a_1u_1+\cdots+a_ru_r\in N_K.$
By the definition of the quotient band $K/U$, this implies
$[a_1]_{U}+\cdots+[a_r]_{U} \in N_{K/U}.$
Hence $\varphi$ preserves and reflects null relations, and therefore
is an isomorphism of bands.
\end{proof}


\begin{prop}\label{prop:finite-level-comparison}
Let $(\widehat{K},\widehat{v})$ be the completion of $(K,v)$. For every $\gamma\in\Gamma_{\geq0}$, the dense embedding $K\hookrightarrow \widehat{K}$ induces a canonical isomorphism of bands
\[
        K/U_v^\gamma\xrightarrow{\simeq}
        \widehat{K}/U_{\widehat{v}}^\gamma.
\]
\end{prop}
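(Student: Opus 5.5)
The map $K/U_v^\gamma\to\widehat K/U_{\widehat v}^\gamma$ will be the one induced by the inclusion $\iota\colon K\hookrightarrow\widehat K$. Since $U_v^\gamma=K^\times\cap U_{\widehat v}^\gamma$, the formula $[a]_{U_v^\gamma}\mapsto[a]_{U_{\widehat v}^\gamma}$ is well defined. It is a morphism of bands: a null relation in $K/U_v^\gamma$ is witnessed by $a_1u_1+\cdots+a_ru_r=0$ in $K$ with $u_i\in U_v^\gamma$, and this same equation holds in $\widehat K$ with $u_i\in U_{\widehat v}^\gamma$. By \eqref{eq:isom} it then remains to prove two things: bijectivity on underlying pointed monoids, and reflection of null relations.

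For bijectivity, only the nonzero parts need attention, since $0$ is handled separately. For injectivity, suppose $a,b\in K^\times$ satisfy $a/b\in U_{\widehat v}^\gamma$. Then $a/b\in K^\times\cap U_{\widehat v}^\gamma=U_v^\gamma$. For surjectivity, take $x\in\widehat K^\times$. By density, choose $a\in K$ with $\widehat v(x-a)>\widehat v(x)+\gamma$; this is possible because $\Gamma$ is the value group of $\widehat K$ and $K$ is dense. Then $a\neq0$, and
\[
\widehat v(a/x-1)=\widehat v(a-x)-\widehat v(x)>\gamma,
\]
so $[x]=[a]$.

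The main step is reflection of null relations. Suppose $a_1,\ldots,a_r\in K^\times$ and there exist $w_i\in U_{\widehat v}^\gamma$ with $\sum_i a_iw_i=0$ in $\widehat K$. We must find $u_i\in U_v^\gamma$ with $\sum_i a_iu_i=0$ in $K$. The case $r=1$ is impossible, so assume $r\geq2$. The plan is to approximate $w_1,\ldots,w_{r-1}$ by elements of $K$ and then solve for the last unit. Put $\delta_i=\widehat v(w_i-1)>\gamma$. For $i<r$, choose $u_i\in K$ with $\widehat v(u_i-w_i)$ larger than both $\delta_i$ and a threshold $M$ fixed below. Then $u_i\in U_v^\gamma$, because
\[
v(u_i-1)\geq\min\bigl(\widehat v(u_i-w_i),\delta_i\bigr)=\delta_i>\gamma .
\]
Now define
\[
u_r=-a_r^{-1}\sum_{i<r}a_iu_i\in K,
\]
so that $\sum_i a_iu_i=0$ holds automatically. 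Since $w_r=-a_r^{-1}\sum_{i<r}a_iw_i$, we get
\[
u_r-w_r=-a_r^{-1}\sum_{i<r}a_i(u_i-w_i),
\]
and therefore
\[
\widehat v(u_r-w_r)\geq\min_{i<r}\bigl(v(a_i)+\widehat v(u_i-w_i)\bigr)-v(a_r).
\]
Take $M=\max_{i<r}\bigl(v(a_r)-v(a_i)\bigr)+\delta_r$. Then $\widehat v(u_r-w_r)>\delta_r$, and consequently
\[
\widehat v(u_r-1)\geq\min\bigl(\widehat v(u_r-w_r),\delta_r\bigr)=\delta_r>\gamma .
\]
In particular $u_r\neq0$ and $u_r\in U_v^\gamma$.

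Hence $[a_1]+\cdots+[a_r]\in N_{K/U_v^\gamma}$, so the map reflects null relations and is an isomorphism of bands. The only delicate point is choosing $M$ uniformly. This works because every quantity involved is a fixed finite value in $\Gamma$, and density of $K$ in $\widehat K$ gives approximations beyond any prescribed element of $\Gamma$.
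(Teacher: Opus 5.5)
Your proof is correct. The bijectivity part matches the paper's; the difference is in reflecting null relations.

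\textbf{Comparison of routes.} The paper identifies $K/U_v^\gamma$ and $\widehat K/U_{\widehat v}^\gamma$ with Krasner's quotient hyperfields via \autoref{prop:field-quotient}. It then quotes \cite[Section~4, Lemma~2]{Lin23} for the fact that the canonical bijection is a hyperfield isomorphism. You instead prove reflection directly from \autoref{dfn:quotient-band}: you approximate $w_1,\ldots,w_{r-1}$ in $K$ and solve for $u_r$.

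The paper's route is shorter but rests on an external lemma. Yours is self-contained. Stripped of the explicit valuation bounds, it uses only three facts: $U_{\widehat v}^\gamma$ is open, $K^\times\cap U_{\widehat v}^\gamma=U_v^\gamma$, and $K$ is dense. Indeed, the set of $(w_1,\ldots,w_{r-1})\in(U_{\widehat v}^\gamma)^{r-1}$ with $-a_r^{-1}\sum_{i<r}a_iw_i\in U_{\widehat v}^\gamma$ is open and nonempty, so it contains a point of $K^{r-1}$. The argument therefore transfers to dense subfields of other topological fields with open subgroups.

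A third, even shorter option is in the spirit of the paper's own valuation-ring section. The proof of \autoref{lem:valuation-ring-null} works word for word for $K$. It shows that the sum is null modulo $U_v^\gamma$ if and only if $v(a_1+\cdots+a_r)>\min_i v(a_i)+\gamma$. Since $\widehat v$ restricts to $v$ on $K$, reflection is then immediate.

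\textbf{A small repair to your thresholds.} If $w_i=1$ then $\delta_i=\infty$, and "$\widehat v(u_i-w_i)>\delta_i$" cannot be met. If $w_r=1$, your $M$ is $\infty$, which approximation by elements of $K$ cannot exceed in general. For $r\geq 3$ the $w_i$ with $i<r$ need not lie in $K$, so this case is real.

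You only ever need bounds $>\gamma$. Take
$M=\max_{i<r}\bigl(v(a_r)-v(a_i)\bigr)+\gamma$, and choose $u_i\in K$ with $\widehat v(u_i-w_i)>\max(M,\gamma)$. Then:
\begin{itemize}
\item for $i<r$, $v(u_i-1)\geq\min\bigl(\widehat v(u_i-w_i),\delta_i\bigr)>\gamma$;
\item your ultrametric estimate gives $\widehat v(u_r-w_r)>\gamma$;
\item hence $\widehat v(u_r-1)\geq\min\bigl(\widehat v(u_r-w_r),\delta_r\bigr)>\gamma$, as before.
\end{itemize}
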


\begin{proof}
On nonzero elements, the map is
$K^\times/U_v^\gamma\to (\widehat{K})^\times/U_{\widehat{v}}^\gamma.$
It is injective because $K^\times\cap U_{\widehat{v}}^\gamma=U_v^\gamma$. To prove surjectivity, let $x\in(\widehat{K})^\times$. 
Since $K$ is dense in
$\widehat{K}$, the open neighborhood
$\set{y\in \widehat{K}|\widehat{v}(y-x)>\widehat{v}(x)+\gamma}$
of $x$ contains an element $a\in K$.  Since $\gamma\geq0$, we have
$\widehat{v}(a-x)>\widehat{v}(x)$,
and hence $\widehat{v}(a)=\widehat{v}(x)$; in particular, $a\neq0$.  Moreover,
\[
\widehat{v}(a/x-1)
=
\widehat{v}((a-x)/x)
=
\widehat{v}(a-x)-\widehat{v}(x)
>
\gamma.
\]
Thus $a/x\in U_{\widehat{v}}^\gamma$, and therefore
$[a]_{U_{\widehat{v}}^\gamma}=[x]_{U_{\widehat{v}}^\gamma}$.
Hence every nonzero class in $\widehat{K}/U_{\widehat{v}}^\gamma$ has a representative
in $K^\times$.

It remains to compare null relations. 
By \autoref{prop:field-quotient}, the source and target are the bands
associated with the Krasner quotient hyperfields $K/U_v^\gamma$ and
$\widehat{K}/U_{\widehat{v}}^\gamma$, respectively. 
The canonical bijection
$K/U_v^\gamma\to \widehat{K}/U_{\widehat{v}}^\gamma$ is an isomorphism of hyperfields by \cite[Section~4, Lemma~2]{Lin23}.
Consequently, it induces an isomorphism of the associated bands.
%
\end{proof}

\begin{thm}[{Krasner--Linzi \cite[Theorem~2]{Lin23}}]\label{thm:krasner}
Let $(\widehat{K},\widehat{v})$ be the completion of $(K,v)$. There is a canonical isomorphism of bands
\[
        \widehat{K}\xrightarrow{\simeq}
        \varprojlim_{\gamma\in\Gamma_{\geq0}}K/U_v^\gamma.
\]
\end{thm}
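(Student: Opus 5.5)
The plan is to apply \autoref{thm:criterion} to the filtered idyll $(\widehat K,(U_{\widehat v}^\gamma)_{\gamma\in\Gamma_{\geq0}})$, and then transport the result along the finite-level isomorphisms of \autoref{prop:finite-level-comparison}. The index set $\Gamma_{\geq0}$ is directed, and $U_{\widehat v}^\delta\subseteq U_{\widehat v}^\gamma$ for $\delta\geq\gamma$, so this is a filtered band.

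First, the isomorphisms $K/U_v^\gamma\simeq\widehat K/U_{\widehat v}^\gamma$ commute with the transition maps of \autoref{lem:transition}, since both are induced by the inclusion $K\hookrightarrow\widehat K$. They therefore give an isomorphism of inverse systems in $\Bands$, and hence
\[
\varprojlim_\gamma K/U_v^\gamma\simeq\varprojlim_\gamma\widehat K/U_{\widehat v}^\gamma .
\]
It then suffices to show that $\eta_{\widehat K}$ is an isomorphism.

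Condition (a), via \autoref{cor:idyll-criterion}: I must show $\widehat K^\times\to\varprojlim_\gamma\widehat K^\times/U_{\widehat v}^\gamma$ is bijective.

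\emph{Injectivity.} Suppose $x/y\in\bigcap_\gamma U^\gamma_{\widehat v}$. Then $\widehat v(x/y-1)>\gamma$ for all $\gamma\in\Gamma_{\geq0}$, so $x=y$, provided $\Gamma$ is nontrivial. The trivial valuation case, where $U^0=\{1\}$, is immediate.

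\emph{Surjectivity.} Take a compatible family of classes $([x_\gamma])_\gamma$. All $x_\gamma$ have the same value $\widehat v(x_\gamma)=\alpha$, since units in $U^0$ have value $0$. For $\delta\geq\gamma$ we have $x_\delta/x_\gamma\in U^\gamma$, so $\widehat v(x_\delta-x_\gamma)>\alpha+\gamma$. Thus $(x_\gamma)_\gamma$ is a Cauchy net in $\widehat K$ indexed by $\Gamma_{\geq0}$, and cofinality matches the valuation topology. By completeness it converges to some $x$, with $\widehat v(x-x_\gamma)\geq$ (in fact $>$) $\alpha+\gamma$ for each $\gamma$. To get the strict inequality I would compare with $x_{\gamma'}$ for some $\gamma'>\gamma$, or if $\Gamma$ has no such element use the strictness directly via the ultrametric inequality. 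This yields $x/x_\gamma\in U^\gamma$ and $x\neq0$, so $x$ maps to the given family.

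I expect this step to be the main obstacle. One has to check carefully that completeness of $\widehat K$ (in Linzi's sense, for arbitrary rank) gives convergence of nets indexed by $\Gamma_{\geq0}$, and handle the strict-versus-nonstrict inequality.

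Condition (b): I must show $\Null_r(\widehat K)=\{(a_i): \sum a_i=0\}$ is closed in the filtration topology. Take $(a_i)$ in its closure, with the zero coordinates dropped. By \autoref{lem:closure-null}, for every $\gamma$ there are $u_i\in U^\gamma$ with $\sum a_iu_i=0$. Then
\[
\widehat v\Bigl(\sum a_i\Bigr)=\widehat v\Bigl(\sum a_i(1-u_i)\Bigr)>\min_i\widehat v(a_i)+\gamma .
\]
Since $\gamma$ is arbitrary, $\sum a_i=0$. Alternatively: the filtration topology coincides with the valuation topology on $\widehat K^\times$ and $0$ is isolated in it, so it is finer than the valuation topology, and addition is continuous. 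Then \autoref{thm:criterion} gives that $\eta_{\widehat K}$ is a band isomorphism, and composing with the isomorphism of limits yields the statement.
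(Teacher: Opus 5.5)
Your proposal is correct and follows the paper's proof essentially step by step. It reduces to $\widehat K$ via the finite-level isomorphisms, proves closedness of $\Null_r$ (the paper uses your second, topological argument), and checks bijectivity on unit groups via \autoref{cor:idyll-criterion} with a Cauchy-limit argument. The obstacle you flag is handled in the paper by restricting your net to a cofinal increasing sequence $(\gamma_\nu)_{\nu<\kappa}$, with $\kappa$ the cofinality of $\Gamma$; this is exactly the kind of sequence to which Linzi's notion of completeness applies. The strict inequality then comes for free: choosing $\nu$ with $\gamma_\nu\geq\gamma$ and $\widehat v(x-x_{\gamma_\nu})>\alpha+\gamma$ makes both terms in the ultrametric estimate for $\widehat v(x-x_\gamma)$ strictly exceed $\alpha+\gamma$.
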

\begin{proof}
By \autoref{prop:finite-level-comparison}, the canonical isomorphisms
$K/U_v^\gamma
\xrightarrow{\simeq}
\widehat{K}/U_{\widehat{v}}^\gamma$ 
($\gamma\in\Gamma_{\geq0}$) 
are compatible with the transition morphisms.  Hence they induce an
isomorphism of bands
\[
\varprojlim_{\gamma\in\Gamma_{\geq0}}K/U_v^\gamma
\xrightarrow{\simeq}
\varprojlim_{\gamma\in\Gamma_{\geq0}}
\widehat{K}/U_{\widehat{v}}^\gamma.
\]
It is therefore enough to prove the assertion for a complete valued
field.  Replacing $(K,v)$ by $(\widehat{K},\widehat{v})$, we assume from now on that
$(K,v)$ is complete.

Endow $K$ with its valuation topology. For every $r\geq1$, the set
\[
\Null_r(K)
=
\Set{
(x_1,\ldots,x_r)\in K^r| x_1+ \cdots + x_r=0}
\]
is closed, being the inverse image of $\set{0}$ under the continuous
sum map $K^r\to K$. The topology on $K^\times$ defined by the
subgroups $U_v^\gamma$ agrees with the topology induced by the
valuation topology, since for $a\in K^\times$,
\[
aU_v^\gamma
=
\set{x\in K^\times| v(x-a)>v(a)+\gamma}.
\]
Consequently, the filtration topology on $K$ is finer than the
valuation topology; it additionally makes $0$ isolated. Hence every
$\Null_r(K)$ is closed in the filtration topology. 
By \autoref{thm:criterion} and \autoref{cor:idyll-criterion}, it is enough
to show that the canonical map
\[
\eta_K^\times\colon
K^\times\to
\varprojlim_{\gamma\in\Gamma_{\geq0}}
K^\times/U_v^\gamma
\]
is bijective.
We first prove that it is injective.  Suppose that $a,b\in K^\times$
have the same image.  Then
$a^{-1}b\in U_v^\gamma$
for every $\gamma\in\Gamma_{\geq0}$.  Since
$\bigcap_{\gamma\in\Gamma_{\geq0}}U_v^\gamma=\set{1}$,
we obtain $a=b$.
We next prove surjectivity.  Let
$(\xi_\gamma)_\gamma
\in
\varprojlim_{\gamma\in\Gamma_{\geq0}}
K^\times/U_v^\gamma$.
For every $\gamma$, choose a representative
$a_\gamma\in K^\times$ such that
$\xi_\gamma=a_\gamma U_v^\gamma$.
If $\delta\geq\gamma$, compatibility gives
$a_\delta a_\gamma^{-1}\in U_v^\gamma$.
In particular,
$v(a_\delta)=v(a_\gamma)$.
Indeed, every element of $U_v^\gamma$ has valuation zero.  Since the
indexing set is directed, there is therefore an element
$\alpha\in\Gamma$ such that
$v(a_\gamma)=\alpha$
for every $\gamma$.

If $\Gamma_{\geq0}=\set{0}$, then $U_v^0=\set{1}$ and surjectivity is
immediate. Assume henceforth that $\Gamma$ is nontrivial.
Let $\kappa$ be the cofinality of $\Gamma$, that is, the least
ordinal for which there exists a cofinal sequence in $\Gamma$.
Choose an increasing sequence
$(\gamma_\nu)_{\nu<\kappa}$ 
of elements of $\Gamma_{\geq0}$ which is cofinal in $\Gamma$: for
every $\delta\in\Gamma$, there exists $\nu<\kappa$ such that
$\delta<\gamma_\nu$.
Following \cite[Section~3]{Lin23}, 
a sequence
$(x_\nu)_{\nu<\kappa}$ in $K$ is Cauchy if, for every
$\beta\in\Gamma$, there exists $\nu_0<\kappa$ such that
$v(x_\nu-x_\mu)>\beta$ 
whenever $\nu,\mu\geq\nu_0$, and $K$ is complete if every such
Cauchy sequence converges in $K$.

We claim that $(a_{\gamma_\nu})_{\nu<\kappa}$ is a Cauchy sequence. 
Let $\beta\in\Gamma$. 
Since $(\gamma_\nu)_{\nu<\kappa}$ is cofinal in $\Gamma$, choose 
$\nu_0<\kappa$ such that 
$\beta - \alpha< \gamma_{\nu_0}$. 
Then  $\alpha+\gamma_{\nu_0}>\beta$. 
If
$\mu,\nu\geq\nu_0$, compatibility implies that
$a_{\gamma_\mu}$ and $a_{\gamma_\nu}$ have the same class modulo
$U_v^{\gamma_{\nu_0}}$. Consequently,
\[
v(a_{\gamma_\mu}-a_{\gamma_\nu}) 
= v(a_{\gamma_\mu}) + v(a_{\gamma_\nu}a_{\gamma_{\mu}}^{-1}-1)
>
\alpha+\gamma_{\nu_0}
>\beta.
\]
Thus $(a_{\gamma_\nu})_{\nu<\kappa}$ is Cauchy. Since $K$ is
complete, it converges to some $a\in K$.

The limit $a$ is nonzero and satisfies $v(a)=\alpha$. Indeed, for all
sufficiently large $\nu$,
$v(a_{\gamma_\nu}-a)>\alpha$,
and the ultrametric inequality then gives
$v(a)=v(a_{\gamma_\nu})=\alpha$.

We show that $a$ represents the original compatible family
$(\xi_\gamma)_\gamma$. Fix $\gamma\in\Gamma_{\geq0}$. By cofinality,
choose $\nu_0<\kappa$ such that $\gamma_{\nu_0}\geq\gamma$, and then
choose $\nu\geq\nu_0$ sufficiently large that
$v(a_{\gamma_\nu}-a)>\alpha+\gamma$. Compatibility gives
$a_{\gamma_\nu}a_\gamma^{-1}\in U_v^\gamma$, and hence
$v(a_{\gamma_\nu}-a_\gamma)>\alpha+\gamma$. It follows that
$v(a-a_\gamma)>\alpha+\gamma$. Since
$v(a_\gamma)=\alpha$, we obtain
$v(aa_\gamma^{-1}-1)>\gamma$. Thus
$aa_\gamma^{-1}\in U_v^\gamma$ and hence
$aU_v^\gamma=\xi_\gamma$. This holds for every $\gamma$, so
$\eta_K^\times$ is surjective.
\end{proof}

\subsection{Valuation rings}

We now pass from the field case to its nonnegative-valuation subband.
We recall that a valuation ring $R$
with fraction field $K$ determines a totally ordered abelian group
$\Gamma=K^\times/R^\times$
and an associated valuation
$v\colon K^\times\to\Gamma.$
With the convention $v(0)=\infty$, one has
\[
R=\set{x\in K| v(x)\geq0},
\qquad
R^\times=\set{x\in K^\times | v(x)=0}.
\]
For these standard facts about valuation rings and their associated
valuations, see
\cite[Section~10.50, especially Tags~00IE and~00IG]{Stacks};
see also \cite[Chapter~2]{EP05}.

Let $(\widehat K,\widehat v)$ be the completion of the valued field
$(K,v)$, in the sense used in \cite[Section~3]{Lin23}, and put
\[
\widehat R
=
\set{x\in\widehat K | \widehat v(x)\geq0}.
\]
The valuation $\widehat v$ has the same value group $\Gamma$.  
Indeed,
take $x\in\widehat K^\times$. 
Since $K$ is dense in $\widehat K$, the open neighborhood
$\set{y\in\widehat K | \widehat v(y-x)>\widehat v(x)}$ of $x$
contains an element $a\in K$.
Therefore, we have 
$\widehat v(a-x)>\widehat v(x)$, 
and hence
$v(a)=\widehat v(x)$.

The ring $\widehat R$ is canonically the completion of $R$ for the
valuation topology.  To see density, let
$x\in\widehat R\smallsetminus\set{0}$ and
$\gamma\in\Gamma_{\geq0}$.  Since $K$ is dense in $\widehat K$, there
exists $a\in K$ such that
$\widehat v(a-x)>\widehat v(x)+\gamma.$
Then
$v(a)=\widehat v(x)\geq0,$
so $a\in R$.  Moreover, $\widehat R$ is closed in $\widehat K$.  In
fact, if a net in $\widehat R$ converges to an element
$x\in\widehat K$ with $\widehat v(x)<0$, then its sufficiently late
terms have valuation $\widehat v(x)<0$, which is impossible.
Therefore $\widehat R$ is complete.

We regard $R$ and $\widehat R$ as the bands associated with their
underlying rings.  For every $\gamma\in\Gamma_{\geq0}$, put
\begin{align*}	
U_v^\gamma
=\set{u\in R^\times | v(u-1)>\gamma }\quad \mbox{and}\quad 
U_{\widehat v}^\gamma
=\set{u\in\widehat R^\times|\widehat v(u-1)>\gamma}.
\end{align*}
We first give a valuation-theoretic description of the null relations
in the corresponding quotient bands.

\begin{lem}
\label{lem:valuation-ring-null}
Let $a_1,\ldots,a_r\in R$ be nonzero, and put
$\alpha = \min_{1\leq i\leq r}v(a_i).$
For every $\gamma\in\Gamma_{\geq0}$, the following are equivalent:
\begin{enumerate}
\item
$[a_1]_{U_v^\gamma} +\cdots+ [a_r]_{U_v^\gamma} \in N_{R/U_v^\gamma}.$
\item
$v(a_1+\cdots+a_r)>\alpha+\gamma.$
\end{enumerate}
The analogous equivalence holds in $\widehat R$, with $v$ replaced
by $\widehat v$.
\end{lem}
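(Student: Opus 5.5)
Unwinding \autoref{dfn:quotient-band} for the ring band $R$, condition (i) says that there are $u_1,\ldots,u_r\in U_v^\gamma$ with $a_1u_1+\cdots+a_ru_r=0$ in $R$. So the claim is a statement about how far the sum $\sum a_i$ can be moved by perturbing each term by a factor in $U_v^\gamma$. I would prove the two implications separately, using only the ultrametric inequality and the fact that $v(u-1)>\gamma$ for $u\in U_v^\gamma$. The argument never uses completeness or anything specific to $R$ beyond valuation properties, so the same proof works verbatim for $\widehat R$ with $\widehat v$ and $U_{\widehat v}^\gamma$.

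\emph{(i) $\Rightarrow$ (ii).} Assume $\sum_i a_iu_i=0$ with $u_i\in U_v^\gamma$. Then
\[
a_1+\cdots+a_r=-\sum_i a_i(u_i-1).
\]
Each term satisfies $v(a_i(u_i-1))=v(a_i)+v(u_i-1)>\alpha+\gamma$, since $v(a_i)\geq\alpha$. By the ultrametric inequality, $v(a_1+\cdots+a_r)>\alpha+\gamma$.

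\emph{(ii) $\Rightarrow$ (i).} Put $s=a_1+\cdots+a_r$ and assume $v(s)>\alpha+\gamma$. If $s=0$, take all $u_i=1$. Otherwise, choose an index $j$ with $v(a_j)=\alpha$. Set $u_i=1$ for $i\neq j$ and
\[
u_j=1-s/a_j.
\]
Then $\sum_i a_iu_i=s-s=0$. It remains to check that $u_j\in U_v^\gamma$:
\begin{itemize}
\item $v(u_j-1)=v(s)-\alpha>\gamma\geq0$;
\item consequently $v(u_j)=0$, so $u_j\in R^\times$;
\item $u_j\neq0$ because $v(s/a_j)>0$.
\end{itemize}
Hence $u_j\in U_v^\gamma$, and (i) holds.

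The only point needing care is this last membership check. In particular, one must take $u_j$ to lie in $R^\times$ rather than merely in $K^\times$, which is exactly where $\gamma\geq0$ is used. Everything else is a routine application of the ultrametric inequality.
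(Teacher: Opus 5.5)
Your proposal is correct and follows essentially the same argument as the paper. In (i) $\Rightarrow$ (ii) you rewrite $\sum a_i=-\sum a_i(u_i-1)$ and apply the ultrametric inequality. In (ii) $\Rightarrow$ (i) you perturb a single minimal-valuation term by $u_j=1-a_j^{-1}s$ and use $\gamma\geq0$ to get $u_j\in R^\times$. The same argument carries over verbatim to $\widehat R$.
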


\begin{proof}
Suppose first that
$[a_1]_{U_v^\gamma} +\cdots+ [a_r]_{U_v^\gamma} \in N_{R/U_v^\gamma}.$
By \autoref{dfn:quotient-band}, there exist
$u_1,\ldots,u_r\in U_v^\gamma$ such that
$a_1u_1+\cdots+a_ru_r=0.$
Consequently,
\[
a_1+\cdots+a_r
=
-\sum_{i=1}^r a_i(u_i-1).
\]
For every $i$, we have
$v\bigl(a_i(u_i-1)\bigr) = v(a_i)+v(u_i-1) > v(a_i)+\gamma \geq \alpha+\gamma.$
The ultrametric inequality therefore gives
$v(a_1+\cdots+a_r)>\alpha+\gamma.$

Conversely, put
$s=a_1+\cdots+a_r$
and suppose that
$v(s)>\alpha+\gamma.$
If $s=0$, we may take $u_i=1$ for every $i$.  Suppose that $s\neq0$.
Choose an index $j$ such that
$v(a_j)=\alpha,$
and put
\[
u_j=1-a_j^{-1}s,
\qquad
u_i=1
\quad(i\neq j).
\]
Then
$v(a_j^{-1}s) = v(s)-v(a_j) > \gamma \geq0.$
Thus $a_j^{-1}s\in R$.  Since
$v(a_j^{-1}s)>0,$
we have
$v(u_j) = v(1-a_j^{-1}s) = 0,$
and hence $u_j\in R^\times$.  Moreover,
$v(u_j-1) = v(a_j^{-1}s) > \gamma,$
so $u_j\in U_v^\gamma$.  Finally,
\[
a_1u_1+\cdots+a_ru_r = s-a_ja_j^{-1}s = 0.
\]
Therefore
$[a_1]_{U_v^\gamma} +\cdots+ [a_r]_{U_v^\gamma} \in N_{R/U_v^\gamma}.$
The same proof applies to the valuation ring $\widehat R$.
\end{proof}

\begin{prop}
\label{prop:valuation-ring-finite-level-comparison}
For every $\gamma\in\Gamma_{\geq0}$, the dense embedding
$R\hookrightarrow\widehat R$ induces a canonical isomorphism of bands
\[
R/U_v^\gamma
\xrightarrow{\simeq}
\widehat R/U_{\widehat v}^\gamma.
\]
\end{prop}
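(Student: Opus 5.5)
The plan is to mirror the proof of \autoref{prop:finite-level-comparison}: first show that the map is a bijection of pointed commutative monoids, then show that it preserves and reflects null relations. For the second part, \autoref{lem:valuation-ring-null} does the work on both sides, so we do not need Linzi's hyperfield lemma.

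\emph{Well-definedness and injectivity.} The inclusion $R\hookrightarrow\widehat R$ is a ring map, hence a band morphism. It sends $U_v^\gamma$ into $U_{\widehat v}^\gamma$, so it descends to a morphism of pointed monoids $R/U_v^\gamma\to\widehat R/U_{\widehat v}^\gamma$. For injectivity, note that the only element of $R$ in the orbit of $0$ is $0$. If $a,b\in R\smallsetminus\set{0}$ satisfy $b=au$ with $u\in U_{\widehat v}^\gamma$, then $u=a^{-1}b\in K^\times$. Since $\widehat v$ extends $v$, we get $v(u)=0$ and $v(u-1)>\gamma$, so $u\in U_v^\gamma$.

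\emph{Surjectivity.} Let $x\in\widehat R\smallsetminus\set{0}$. By density, choose $a\in K$ with $\widehat v(a-x)>\widehat v(x)+\gamma$. As computed earlier in this subsection, $v(a)=\widehat v(x)\geq0$, so $a\in R\smallsetminus\set{0}$. Moreover $\widehat v(a/x-1)>\gamma$, and this forces $\widehat v(a/x)=0$. Hence $a/x\in U_{\widehat v}^\gamma$, and $[a]=[x]$.

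\emph{Null relations.} Every class in either quotient has a representative in $R$, and zero terms may be dropped. It therefore suffices to take nonzero $a_1,\ldots,a_r\in R$ and compare the two null conditions. By \autoref{lem:valuation-ring-null}, applied in $R$ and in $\widehat R$, both conditions are equivalent to the same inequality: $v(\sum a_i)>\min_i v(a_i)+\gamma$. This holds because $\widehat v$ restricts to $v$ on $R$ and $\sum a_i$ is computed in $R$. So the map preserves and reflects null relations, and by \eqref{eq:isom} it is an isomorphism of bands.

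The main point requiring care is surjectivity. One must check that the approximant $a$ lies in $R$ and that $a/x$ is a unit of $\widehat R$. Both follow from the ultrametric inequality, using $\gamma\geq0$.
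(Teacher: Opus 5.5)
Your proposal is correct and follows essentially the same route as the paper: injectivity from $\widehat v|_K=v$, surjectivity by density plus the ultrametric inequality, and null relations by applying \autoref{lem:valuation-ring-null} in both $R$ and $\widehat R$. Your explicit check that $\widehat v(a/x)=0$, so that $a/x$ lies in $\widehat R^\times$ and hence in $U_{\widehat v}^\gamma$, is a detail the paper leaves implicit.
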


\begin{proof}
The embedding $R\hookrightarrow\widehat R$ maps $U_v^\gamma$ into
$U_{\widehat v}^\gamma$ and therefore induces a morphism of bands
$\varphi_\gamma\colon R/U_v^\gamma \to \widehat R/U_{\widehat v}^\gamma.$

We first show that $\varphi_\gamma$ is an isomorphism of underlying
pointed commutative monoids.  Suppose that
$a,b\in R\smallsetminus\set{0}$ have the same image in
$\widehat R/U_{\widehat v}^\gamma$.  Then
$ab^{-1}\in U_{\widehat v}^\gamma$.
Since $a,b\in K^\times$ and $\widehat v$ restricts to $v$ on $K$, we
obtain $v(ab^{-1}) = 0$ so that $ab^{-1}\in R^\times$. 
Moreover,  
by $v(ab^{-1}-1)>\gamma$, 
$ab^{-1}\in U_v^\gamma$. 
Hence, 
$a$ and $b$ have the same class in $R/U_v^\gamma$.  Thus
$\varphi_\gamma$ is injective.

To prove surjectivity, let
$x\in\widehat R\smallsetminus\set{0}$.  Since $K$ is dense in
$\widehat K$, there exists $a\in K$ such that
$\widehat v(a-x)>\widehat v(x)+\gamma$.
Since
$\widehat v(a-x)>\widehat v(x)$,
the ultrametric inequality gives
$v(a)=\widehat v(x)\geq0$.
Thus
$a\in R\smallsetminus\set{0}$.
Furthermore,
$\widehat v(a/x-1) = \widehat v(a-x)-\widehat v(x) > \gamma$.
Hence
$a/x\in U_{\widehat v}^\gamma$,
so $a$ and $x$ determine the same class in
$\widehat R/U_{\widehat v}^\gamma$.  Therefore
$\varphi_\gamma$ is surjective.

It remains to compare null relations.  Let
$a_1,\ldots,a_r\in R$ be nonzero, and put
$\alpha=\min_i v(a_i).$
By \autoref{lem:valuation-ring-null},
$[a_1]_{U_v^\gamma} +\cdots+ [a_r]_{U_v^\gamma} \in N_{R/U_v^\gamma}$
if and only if
$v(a_1+\cdots+a_r)>\alpha+\gamma.$
The corresponding criterion in $\widehat R$ shows that
$[a_1]_{U_{\widehat v}^\gamma}
+\cdots+
[a_r]_{U_{\widehat v}^\gamma}
\in N_{\widehat R/U_{\widehat v}^\gamma}$
if and only if
$\widehat v(a_1+\cdots+a_r)>\alpha+\gamma.$
Since $\widehat v$ restricts to $v$ on $K$, these two conditions are
equivalent.  Zero terms may be omitted, since zeros do not affect
null relations.  Thus $\varphi_\gamma$ preserves and reflects null
relations and is therefore an isomorphism of bands.
\end{proof}

\begin{thm}
\label{thm:valuation-ring-krasner}
Let $R$ be a valuation ring with value group $\Gamma$, and let
$\widehat R$ be its completion for the valuation topology.  For
$\gamma\in\Gamma_{\geq0}$, put
$U_v^\gamma
=\set{u\in R^\times|v(u-1)>\gamma}$.
There is a canonical isomorphism of bands
\[
\widehat R
\xrightarrow{\simeq}
\varprojlim_{\gamma\in\Gamma_{\geq0}}
R/U_v^\gamma.
\]
\end{thm}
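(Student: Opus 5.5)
First reduce to the complete case, as in the proof of \autoref{thm:krasner}. By \autoref{prop:valuation-ring-finite-level-comparison}, the isomorphisms $R/U_v^\gamma\xrightarrow{\simeq}\widehat R/U_{\widehat v}^\gamma$ are compatible with the transition morphisms of \autoref{lem:transition}. They therefore induce an isomorphism of bands
\[
\varprojlim_\gamma R/U_v^\gamma\simeq\varprojlim_\gamma\widehat R/U_{\widehat v}^\gamma .
\]
It thus suffices to show that $\eta\colon\widehat R\to\varprojlim_\gamma\widehat R/U_{\widehat v}^\gamma$ is an isomorphism. Since $\widehat R$ is a valuation ring with completion $\widehat K$, we may rename and assume that $K$ is complete and $R=\widehat R$. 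We then apply \autoref{thm:criterion} and check conditions (a) and (b).

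For (b), use \autoref{cor:subband-reconstruction}, or rather the closedness argument inside it. Regard $R$ as a subband of the field-band $K$; this is legitimate because $-1\in R$ and $N_R=R^+\cap N_K$, since both consist of formal sums summing to $0$. The loci $\Null_r(K)$ are closed in the valuation topology. For $a\in R\smallsetminus\set{0}$, the filtration neighbourhood is
\[
aU_v^\gamma=\set{x\in K^\times | v(x-a)>v(a)+\gamma},
\]
and every such $x$ automatically lies in $R$ and has unit ratio $x/a$. So these sets are valuation-open balls, and $0$ is isolated. Hence the filtration topology on $R$ is finer than the subspace topology from $K$, and $\Null_r(R)=R^r\cap\Null_r(K)$ is closed in $R^r$.

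For (a), note that $R$ is not an idyll, so \autoref{cor:idyll-criterion} does not apply directly. Instead I will use $R\smallsetminus\set{0}\subseteq K^\times$ and compare inverse systems. The orbits of $U_v^\gamma$ on $R$ are $\set{0}$ together with the cosets $aU_v^\gamma$ for $a\in R\smallsetminus\set{0}$. These cosets coincide with the $K^\times/U_v^\gamma$-cosets, since the two definitions of $U_v^\gamma$ agree: $v(u-1)>\gamma\geq0$ forces $v(u)=0$.

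Injectivity of $\eta$ is as in \autoref{thm:krasner}. Zero is separated from nonzero elements because $0$ forms its own orbit, and $\bigcap_\gamma U_v^\gamma=\set{1}$ separates distinct nonzero elements.

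For surjectivity, take a compatible family $(\xi_\gamma)_\gamma$. If some component is $0$, then by compatibility and directedness all components are $0$, since a nonzero orbit never maps to $0$ under the transition maps. Otherwise $(\xi_\gamma)$ lies in $\varprojlim_\gamma K^\times/U_v^\gamma$. The surjectivity argument in \autoref{thm:krasner}, which uses completeness of $K$, yields $a\in K^\times$ with $aU_v^\gamma=\xi_\gamma$ for every $\gamma$. Moreover $v(a)=\alpha=v(a_\gamma)\geq0$ because the representatives $a_\gamma$ lie in $R$, so $a\in R$. This gives (a), and \autoref{thm:criterion} concludes.

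The main obstacle I expect is the bookkeeping of this last step: the zero component, and making sure the limit produced by the field argument lands in $R$. Both should be handled by the valuation equality $v(a)=v(a_\gamma)$ and by the observation that orbits in $R$ agree with those in $K$.
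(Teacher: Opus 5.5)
Your proposal is correct, but it follows a different route from the paper. Both proofs start with the same reduction to the complete case.

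The paper never applies \autoref{thm:criterion} to $\widehat R$ itself. Instead it:
\begin{enumerate}
\item embeds each $\widehat R/U_{\widehat v}^\gamma$ into $\widehat K/U_{\widehat v}^\gamma$, checking injectivity and reflection of null relations via $U_{\widehat v}^\gamma\subseteq\widehat R^\times$;
\item passes to an embedding of inverse limits;
\item identifies the target with $\widehat K$ by \autoref{thm:krasner};
\item shows the image is exactly $\widehat R$, using $\widehat v(x)=\widehat v(a_\gamma)\geq0$.
\end{enumerate}

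You instead apply the criterion directly to the non-idyll $\widehat R$. Your condition (b) is precisely the mechanism of \autoref{cor:subband-reconstruction} with ambient band $C=\widehat K$. You check condition (a) by hand: you treat the zero component separately, since \autoref{cor:idyll-criterion} is unavailable. For the nonzero case you reuse only the monoid-level Cauchy-sequence part of the field proof, together with the same valuation bound $v(a)=v(a_\gamma)\geq0$.

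What each route buys: the paper uses the field theorem as a black box at the level of bands, so it never re-examines closedness of null loci. Yours exhibits the valuation-ring theorem as a direct instance of the subband reconstruction principle. It also avoids verifying that the quotient bands of $\widehat R$ embed into those of $\widehat K$.

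One small wording point. The fact that the sets $aU_v^\gamma$ are valuation-open balls, taken alone, only compares the topologies in the opposite direction (filtration coarser). What makes the filtration topology finer is cofinality: as $\gamma$ runs over $\Gamma_{\geq0}$, the radii $v(a)+\gamma$ are cofinal in $\Gamma$. So these balls form a neighbourhood basis of each nonzero $a$, and $0$ is isolated.
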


\begin{proof}
By \autoref{prop:valuation-ring-finite-level-comparison}, the compatible
finite-level isomorphisms
$R/U_v^\gamma
\xrightarrow{\simeq}
\widehat R/U_{\widehat v}^\gamma$ 
induce an isomorphism
\[
\varprojlim_{\gamma\in\Gamma_{\geq0}}R/U_v^\gamma
\xrightarrow{\simeq}
\varprojlim_{\gamma\in\Gamma_{\geq0}}
\widehat R/U_{\widehat v}^\gamma.
\]
It is therefore enough to reconstruct $\widehat R$ from the quotients
on the right.

For every $\gamma$, the inclusion
$\widehat R\subseteq\widehat K$ induces an embedding of bands
$\widehat R/U_{\widehat v}^\gamma
\hookrightarrow
\widehat K/U_{\widehat v}^\gamma$.
Indeed, equality of two nonzero classes in the target is already
detected by a quotient in $\widehat R$, since
$U_{\widehat v}^\gamma\subseteq\widehat R^\times$, and null relations
are reflected because all representatives and all multipliers lie in
$\widehat R$. These embeddings are compatible with the transition
maps and hence induce an embedding
\[
\varprojlim_\gamma
\widehat R/U_{\widehat v}^\gamma
\hookrightarrow
\varprojlim_\gamma
\widehat K/U_{\widehat v}^\gamma.
\]

By \autoref{thm:krasner}, the target is canonically isomorphic to
$\widehat K$. Under this identification, the image of the above
embedding is precisely the nonnegative-valuation subband
$\widehat R$. Indeed, if $x\in\widehat R$, then every component
$[x]_{U_{\widehat v}^\gamma}$ lies in
$\widehat R/U_{\widehat v}^\gamma$. Conversely, if a nonzero
$x\in\widehat K$ corresponds to an element of the inverse limit on the
left, then for any $\gamma$ its component is represented by some
$a_\gamma\in\widehat R\smallsetminus\set{0}$. Since every element of
$U_{\widehat v}^\gamma$ has valuation zero,
$\widehat v(x)=\widehat v(a_\gamma)\geq0$,
and hence $x\in\widehat R$. The zero family corresponds to
$0\in\widehat R$.

Thus the Krasner isomorphism restricts to a canonical isomorphism of
bands
\[
\widehat R
\xrightarrow{\simeq}
\varprojlim_{\gamma\in\Gamma_{\geq0}}
\widehat R/U_{\widehat v}^\gamma.
\]
Combining this with the finite-level comparison above gives the
asserted isomorphism.
\end{proof}


\begin{ex}
For a field $k$, 
let $R=k[t]_{(t)}$ with its associated $t$-adic valuation.  Its value group is
$\mathbb Z$, and its completion for the valuation topology is
$\widehat R=k[[t]].$
For $n\in\mathbb Z_{\geq0}$, we have
$U_v^n = 1+t^{n+1}R.$
Therefore \autoref{thm:valuation-ring-krasner} gives a canonical
isomorphism of bands
\[
k[[t]]
\xrightarrow{\simeq}
\varprojlim_{n \geq 0}
k[t]_{(t)}/(1+t^{n+1}k[t]_{(t)}).
\]
The band $k[[t]]$ is not an idyll, since $t$ is a nonzero nonunit.

In \cite[Example~4]{Lin23}, Linzi observes that the elements
of nonnegative valuation in the quotient hyperfields associated with
$k(\!(t)\!)$ correspond to truncated power series, and that the limit
construction recovers the full information of $k[\![t]\!]$.
\end{ex}

\subsection{Partial-field bands}

The following construction supplies a broad class of reconstructible
idylls which need not arise from hyperfields.

\begin{dfn}
Let $K$ be a field and let $G\subseteq K^\times$ be a subgroup
containing $-1$.  Define
$B_G=G\cup\set{0}$
with multiplication inherited from $K$ and with null set
\[
N_{B_G}
=
\Set{
\sum_{i=1}^r a_i\in B_G^+|\sum_{i=1}^r a_i=0\text{ in }K}.
\]
\end{dfn}
The object $B_G$ is an idyll.
%
In fact, the inherited null set is an ideal.  Since $-1\in G$, the additive
inverse of $a\in G$ is $-a\in G$, and it is unique because it is unique
in $K$.  Every nonzero element belongs to the group $G$.

\begin{cor}
\label{cor:partial-field-reconstruction}
Let $K$ be a Hausdorff topological field, let
$G\subseteq K^\times$ be a subgroup containing $-1$, and let
$\mathcal U=(U_\lambda)_{\lambda\in\Lambda}$ be a filtration of $G$.
Assume that the canonical map
$G\to\varprojlim_\lambda G/U_\lambda$ 
is bijective and that the filtration topology on $G$ is at least as
fine as the subspace topology inherited from $K$. Then there is a
canonical isomorphism of bands
\[
B_G\xrightarrow{\simeq}\varprojlim_\lambda B_G/U_\lambda.
\]
\end{cor}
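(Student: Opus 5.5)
The plan is to deduce this from \autoref{cor:subband-reconstruction}, with the field $K$ (viewed as a band) as the ambient band $C$ and $B_G$ as the subband. Two things have to be checked: that $B_G$ really is a subband of $K$ in the sense of \autoref{dfn:subband}, and that the hypotheses of \autoref{cor:subband-reconstruction} hold.

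First, $B_G=G\cup\set{0}$ is a pointed submonoid of $K$ containing $-1$. Its null set was defined as those formal sums in $B_G^+$ that vanish in $K$. That set is exactly $B_G^+\cap N_K$, so $B_G$ is a subband of $K$.

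Next, give $K$ its field topology. For each $r\geq1$, $\Null_r(K)$ is the preimage of $\set{0}$ under the continuous map $K^r\to K$, $(x_1,\ldots,x_r)\mapsto x_1+\cdots+x_r$. Since $K$ is Hausdorff, $\set{0}$ is closed, and hence $\Null_r(K)$ is closed in $K^r$. The filtration $\mathcal U$ of $G$ consists of subgroups of $G=B_G^\times$, so it is a filtration of the band $B_G$.

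The one point needing care is the comparison of topologies on all of $B_G$, including $0$. By hypothesis the filtration topology on $G$ is finer than the subspace topology from $K$. In the filtration topology $0$ is isolated, so $\set{0}$ is open. A subset of $B_G$ that is open in the subspace topology therefore restricts to an open subset of $G$ in the filtration topology. Adjoining $0$ or not keeps it open, since $\set{0}$ is open and $G=B_G\smallsetminus\set{0}$ is open as well. So the filtration topology on $B_G$ is finer than the subspace topology, as \autoref{cor:subband-reconstruction} requires.

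Finally, $B_G$ is an idyll, so by \autoref{cor:idyll-criterion} the bijectivity of $G\to\varprojlim_\lambda G/U_\lambda$ is equivalent to the canonical map $B_G\to\varprojlim_\lambda B_G/U_\lambda$ being an isomorphism of pointed commutative monoids. This uses the identification $B_G/U_\lambda=\set{0}\sqcup G/U_\lambda$ from that proof. With all hypotheses verified, \autoref{cor:subband-reconstruction} gives the isomorphism of bands.

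I do not expect a real obstacle. The only subtle step is the topological comparison at $0$, which is settled by the isolation of $0$.
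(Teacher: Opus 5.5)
Your proposal is correct and follows the paper's proof essentially step for step. Like the paper, you view $B_G$ as a subband of the field band $K$, get closedness of $\Null_r(K)$ from the Hausdorff property, and use \autoref{cor:idyll-criterion} to turn bijectivity on $G$ into reconstruction of the underlying monoid; you then handle the topology comparison via the isolated point $0$ (spelled out a bit more carefully than in the paper) and conclude with \autoref{cor:subband-reconstruction}.
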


\begin{proof}
Regard $K$ as the band associated with the field $K$. By construction,
$B_G\subseteq K$ is a subband. Since $K$ is Hausdorff,
\[
\Null_r(K)
=
\set{(x_1,\ldots,x_r)\in K^r | x_1+\cdots+x_r=0}
\]
is closed for every $r\geq1$.
By \autoref{cor:idyll-criterion}, the
assumed bijectivity of
\[
G\to\varprojlim_\lambda G/U_\lambda
\]
is equivalent to reconstruction of the underlying pointed commutative
monoid of $B_G$. The filtration topology on $B_G$ is finer than the
subspace topology inherited from $K$: this holds on $G$ by assumption,
and the filtration topology makes $0$ isolated. Hence all hypotheses of
\autoref{cor:subband-reconstruction} are satisfied.
\end{proof}

%
\begin{ex}
\label{ex:nonhyperfield}
Let $p>3$ and put
$G = p^{\mathbb Z}\cdot\set{\pm1}\cdot(1+p\mathbb Z_p) \subseteq\mathbb Q_p^\times.$
Let
\[
B_G=G\cup\set{0},
\qquad
U_n=1+p^n\mathbb Z_p
\quad(n\geq1),
\]
and put $\mathcal U=(U_n)_{n\geq1}$.

Endow $G$ with the subspace topology inherited from
$\mathbb Q_p^\times$.  The decomposition
\[
G
\simeq
p^{\mathbb Z}\times\set{\pm1}\times(1+p\mathbb Z_p)
\]
is an isomorphism of topological groups, where the first two factors
are discrete.  Moreover,
\[
1+p\mathbb Z_p
\xrightarrow{\simeq}
\varprojlim_n
(1+p\mathbb Z_p)/(1+p^n\mathbb Z_p).
\]
It follows that $(U_n)_{n\geq1}$ is a neighborhood basis of $1$ in
$G$, that its filtration topology agrees with the subspace topology,
and that the canonical map
\[
G\to\varprojlim_n G/U_n
\]
is bijective.  Therefore \autoref{cor:partial-field-reconstruction} gives a
canonical isomorphism
\[
B_G\xrightarrow{\simeq}\varprojlim_n B_G/U_n.
\]

On the other hand, $B_G$ does not arise from a hyperfield.  Indeed,
suppose that $B_G$ were the band associated with a hyperfield.  Since
the hyper-sum $1\boxplus1$ is nonempty, choose
$c\in1\boxplus1$.  Then
$1+1-c\in N_{B_G},$
and hence $c=2$ in $\mathbb Q_p$.  However, $2\notin G$.  Indeed, if
$2\in G$, then $v_p(2)=0$ would imply
$
2\in\set{\pm1}\cdot(1+p\mathbb Z_p)$,
and therefore
$2\equiv\pm1\pmod p$,
which is impossible for $p>3$.  Thus $B_G$ is an idyll which is reconstructed from its quotient bands but
does not arise from a hyperfield.
\end{ex}
%
%
%

\subsection{Unit bands}

Let $R$ be a commutative ring and let
$I\subseteq\operatorname{Jac}(R)$ be an ideal, 
where
$\operatorname{Jac}(R)$ denotes the Jacobson radical. 
Put $\widehat R=\varprojlim_n R/I^n$.
Associate with $R$ the \emph{unit band}
\begin{equation}
    \label{def:uband}
B^\times(R)=R^\times\cup\set{0},
\end{equation}
with multiplication inherited from $R$ and with null relations given
by the finite sums which vanish in $R$. Thus $B^\times(R)$ is a
subband of the band associated with $R$ in the sense of
\autoref{dfn:subband}. Put
$U_n=1+I^n\subseteq R^\times$,
$\mathcal U=(U_n)_{n\geq1}$.
The inclusion $I\subseteq\operatorname{Jac}(R)$ implies that
$1+I^n\subseteq R^\times$ and that every unit of $R/I^n$ lifts to a
unit of $R$.

\begin{prop}
\label{prop:adic-finite-level}
For every $n\geq1$, reduction modulo $I^n$ induces a canonical
isomorphism of bands
\[
B^\times(R)/U_n\xrightarrow{\simeq}B^\times(R/I^n).
\]
\end{prop}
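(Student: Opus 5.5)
Let $\rho\colon R\to R/I^n$ be the reduction map. Since $\rho$ is a ring homomorphism, it sends $R^\times$ into $(R/I^n)^\times$. It therefore defines a morphism of pointed commutative monoids $B^\times(R)\to B^\times(R/I^n)$. This morphism preserves null relations: if $\sum a_i=0$ in $R$, then $\sum\rho(a_i)=0$ in $R/I^n$. Every $u\in U_n=1+I^n$ satisfies $\rho(u)=1$, so the map is constant on $U_n$-orbits. It thus factors through a map $\varphi_n\colon B^\times(R)/U_n\to B^\times(R/I^n)$. This $\varphi_n$ is a band morphism, because a null relation in the quotient is represented by $\sum a_iu_i=0$ in $R$, and $\rho(a_iu_i)=\rho(a_i)$.

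Next I would show that $\varphi_n$ is bijective on underlying pointed monoids.
\begin{itemize}
\item \emph{Zero.} Zero maps to zero, and no unit maps to zero because $I^n\neq R$ (as $I\subseteq\operatorname{Jac}(R)$). The degenerate case $R=0$ is trivial.
\item \emph{Surjectivity.} This is exactly the lifting statement recorded before the proposition: every unit of $R/I^n$ lifts to a unit of $R$.
\item \emph{Injectivity.} Suppose $a,b\in R^\times$ and $\rho(a)=\rho(b)$. Then $a-b\in I^n$, so $ab^{-1}=1+b^{-1}(a-b)\in 1+I^n=U_n$. Hence $[a]_{U_n}=[b]_{U_n}$.
\end{itemize}

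By \eqref{eq:isom}, it then remains to show that $\varphi_n$ reflects null relations; this is the main step. Let $a_1,\ldots,a_r\in R^\times$ (zero terms may be dropped) with $\sum\rho(a_i)=0$. Then $s:=a_1+\cdots+a_r\in I^n$. I would use the same device as in \autoref{lem:valuation-ring-null} and absorb the error into one term. Put $u_1=1-a_1^{-1}s$ and $u_i=1$ for $i\geq2$. Then $u_1\in 1+I^n\subseteq R^\times$, so $u_1\in U_n$, and
\[
a_1u_1+a_2+\cdots+a_r=s-s=0.
\]
Hence $[a_1]_{U_n}+\cdots+[a_r]_{U_n}\in N_{B^\times(R)/U_n}$ by \autoref{dfn:quotient-band}.

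The only subtle point is the case $r=1$. There the relation $\rho(a_1)=0$ would force $a_1\in I^n$, which is impossible for a unit. So the case $r=1$ cannot occur unless $a_1=0$, which has been excluded. This completes the proof that $\varphi_n$ is an isomorphism of bands.
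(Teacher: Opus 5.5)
Your proposal is correct and follows essentially the same route as the paper: the unit-group bijection $R^\times/(1+I^n)\simeq(R/I^n)^\times$, which the paper takes from $I^n\subseteq\operatorname{Jac}(R)$ and you verify explicitly, followed by preservation of null relations via $\rho(a_iu_i)=\rho(a_i)$ and reflection by absorbing $s=\sum a_i\in I^n$ into $u_1=1-a_1^{-1}s\in 1+I^n$. Your separate discussion of $r=1$ is harmless but not needed, since the general construction relies only on $1+I^n\subseteq R^\times$ and so already covers that case.
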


\begin{proof}
Since $I^n\subseteq\operatorname{Jac}(R)$, reduction modulo $I^n$
induces an isomorphism
\[
R^\times/(1+I^n)
\xrightarrow{\simeq}
(R/I^n)^\times.
\]
It remains to compare null relations.

Let $a_1,\ldots,a_r\in R^\times$. Suppose first that the sum of
the classes $[a_1]_{U_n},\ldots,[a_r]_{U_n}$ is null in
$B^\times(R)/U_n$.
By \autoref{dfn:quotient-band}, there exist
$u_1,\ldots,u_r\in1+I^n$ such that
$a_1u_1+\cdots+a_ru_r=0$.  It follows that
\[
a_1+\cdots+a_r
=
-\sum_{i=1}^r a_i(u_i-1)
\in I^n.
\]
Thus the reductions of the $a_i$ have null sum in
$B^\times(R/I^n)$.

Conversely, suppose that
$a_1+\cdots+a_r\in I^n$.  Set
$u_1 = 1-a_1^{-1}(a_1+\cdots+a_r) \in1+I^n$
and $u_i=1$ for $i>1$.  Then
$a_1u_1+\cdots+a_ru_r=0$.  Therefore
$[a_1]_{U_n}+\cdots+[a_r]_{U_n} \in N_{B^\times(R)/U_n}.$
Thus reduction preserves and reflects null relations.
\end{proof}

\begin{thm}
\label{thm:adic-comparison}
There is a canonical isomorphism of bands
\[
B^\times(\widehat R)
\xrightarrow{\simeq}
\widehat{B^\times(R)}_{\mathcal U}
=
\varprojlim_{n\geq1}B^\times(R)/U_n.
\]
\end{thm}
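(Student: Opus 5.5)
Our plan is to reduce to finite levels using \autoref{prop:adic-finite-level} and then compute the limit directly. Writing $B_n=B^\times(R/I^n)$, the isomorphisms $B^\times(R)/U_n\simeq B_n$ of \autoref{prop:adic-finite-level} are compatible with the transition maps $B^\times(R)/U_{n+1}\to B^\times(R)/U_n$ and $B_{n+1}\to B_n$, since both are induced by reduction. Hence they give an isomorphism of bands
\[
\varprojlim_n B^\times(R)/U_n\xrightarrow{\simeq}\varprojlim_n B^\times(R/I^n).
\]
It then suffices to construct a canonical isomorphism $B^\times(\widehat R)\to\varprojlim_n B^\times(R/I^n)$. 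The natural map is $x\mapsto(x\bmod I^n\widehat R)_n$, using $\widehat R/I^n\widehat R\simeq R/I^n$. To avoid any finiteness hypothesis on $I$, I would instead use the projections $\widehat R\to R/I^n$ directly. These send units to units and vanishing sums to vanishing sums, so the map is a band morphism.

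On underlying pointed monoids, the key point is that $\widehat R^\times=\varprojlim_n (R/I^n)^\times$. One inclusion is clear. Conversely, a compatible family of units $(\bar x_n)$ defines $x\in\widehat R$, and the inverses $(\bar x_n^{-1})$ are also compatible by uniqueness of inverses, so they define $x^{-1}$. Now $0$ in each $B_n$ is the element $0$ of the pointed monoid, and the unit parts are disjoint from it (assuming $R/I^n\neq0$; if $I=R$ everything degenerates and is checked separately). A compatible family in $\varprojlim_n B_n$ is therefore either identically $0$ or consists entirely of units. This matches the argument of \autoref{cor:idyll-criterion}, and gives $\varprojlim_n B_n=\set{0}\sqcup\varprojlim_n(R/I^n)^\times=B^\times(\widehat R)$ as pointed monoids.

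For null relations, let $x_1,\ldots,x_r\in\widehat R^\times$ be such that the images have null sum in every $B_n$, that is, $x_1+\cdots+x_r\mapsto 0$ in every $R/I^n$. Since $\widehat R=\varprojlim R/I^n$ as a set, an element whose projections all vanish is $0$, so $x_1+\cdots+x_r=0$ in $\widehat R$. Hence the map reflects null relations, and by \eqref{eq:isom} it is an isomorphism of bands. Alternatively, one could phrase this through \autoref{thm:criterion}, but the direct computation seems cleaner.

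The main obstacle is bookkeeping rather than substance: we must make sure the composite is the canonical map and that the identification of each finite level does not require $\widehat R/I^n\widehat R\simeq R/I^n$, which can fail for non-finitely generated $I$. We sidestep this by working only with the projections $\widehat R\to R/I^n$. The remaining care point is the zero/unit dichotomy in the inverse limit, which uses that $0$ is not a unit in each $R/I^n$.
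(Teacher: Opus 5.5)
Your proposal is correct and follows essentially the same route as the paper. You reduce to $\varprojlim_n B^\times(R/I^n)$ via \autoref{prop:adic-finite-level}, identify $\widehat R^\times$ with $\varprojlim_n(R/I^n)^\times$ because inverses of a compatible family are again compatible, and check null relations componentwise. Your explicit zero/unit dichotomy in the limit and your use of the projections $\widehat R\to R/I^n$ (rather than $\widehat R/I^n\widehat R$) spell out details the paper leaves implicit; note also that the degenerate case only arises for $R=0$, since $I\subseteq\operatorname{Jac}(R)$ otherwise forces $I\neq R$.
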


\begin{proof}
By \autoref{prop:adic-finite-level}, it is enough to identify
$B^\times(\widehat R)$ with $\varprojlim_{n\geq1}B^\times(R/I^n)$.
Since
$\widehat R=\varprojlim_{n\geq1}R/I^n$,
the canonical map
\[
\widehat R^\times
\to
\varprojlim_{n\geq1}(R/I^n)^\times
\]
is bijective.  Indeed, a unit determines a compatible family of units,
and conversely the inverses of a compatible family of units are again
compatible and determine its inverse in $\widehat R$.  Hence the
canonical morphism
\[
B^\times(\widehat R)
\to
\varprojlim_{n\geq1}B^\times(R/I^n)
\]
is an isomorphism of underlying pointed commutative monoids.

It also preserves and reflects null relations: for
$a_1,\ldots,a_r\in B^\times(\widehat R)$,
$a_1+\cdots+a_r=0\text{ in }\widehat R$
if and only if its image is zero in $R/I^n$ for every $n$.  By the
description of null relations in inverse limits of bands, the above
canonical morphism is therefore an isomorphism of bands.  Combining
this with the compatible isomorphisms of
\autoref{prop:adic-finite-level} proves the assertion.
\end{proof}

\begin{ex}
Let $d\geq1$ and put
$R=k[x_1,\ldots,x_d]_{(x_1,\ldots,x_d)}$,
$I=(x_1,\ldots,x_d)$.
Then
$\widehat R=k[[x_1,\ldots,x_d]]$,
and \autoref{thm:adic-comparison} gives a canonical isomorphism
\[
B^\times(k[[x_1,\ldots,x_d]])
\xrightarrow{\simeq}
\widehat{B^\times(R)}_{\mathcal U}
=
\varprojlim_{n\geq1} B^\times(R)/(1+I^n).
\]

Likewise, let
$R=k[x,y]_{(x,y)}/(xy)$, $I=(x,y)$.
Then
$\widehat R=k[[x,y]]/(xy)$,
and hence there is a canonical isomorphism
\[
B^\times(k[[x,y]]/(xy))
\xrightarrow{\simeq}
\widehat{B^\times(R)}_{\mathcal U}
=
\varprojlim_{n\geq1} B^\times(R)/(1+I^n).
\]

The bands on the left do not arise from hyperfields.  Indeed, let
$t=x_1$ in the first ring and let $t=x$ in the second ring.  In either
case, $t$ is a nonzero nonunit, whereas $1$ and $-1+t$ are units.
Suppose that the corresponding unit band $B^\times(\widehat R)$ arose from a
hyperfield.  Since
$1\boxplus(-1+t)$
is nonempty, choose $c\in1\boxplus(-1+t)$.  Then
$1+(-1+t)-c\in N_{B^\times(\widehat R)},$
so $c=t$ in $\widehat R$.  This is impossible because
$t\notin B^\times(\widehat R) = \widehat R^\times\cup\set{0}.$
Thus these Krasner completions do not arise from hyperfields.
\end{ex}

\end{document}